\definecolor{Black}{cmyk}{0,0,0,1}
\definecolor{OrangeRed}{cmyk}{0,0.6,1,0} % half magenta only, full yellow
\definecolor{DarkBlue}{cmyk}{1,1,0,0.20}
\definecolor{myblue}{rgb}{0.66,0.78,1.00}
\definecolor{Violet}{cmyk}{0.79,0.88,0,0}
\definecolor{Lavender}{cmyk}{0,0.48,0,0}
\newtheorem{theorem}{Theorem}[section]
\newtheorem{lemma}[theorem]{Lemma}
\newtheorem{corollary}[theorem]{Corollary}
\newtheorem{proposition}[theorem]{Proposition}
\theoremstyle{definition}
\newtheorem{remark}[theorem]{Remark}
\newcommand{\C}{\mathbb{C}}
\newcommand{\bea}{\begin{eqnarray*}}
\newcommand{\eea}{\end{eqnarray*}}
\numberwithin{equation}{section}
\begin{document}
%\tableofcontents
\title[Weighted approximation in $\mathbb{C}$ ]{Weighted approximation in $\mathbb{C}$ }
\author{Jujie Wu}
\author{John Erik Forn\ae ss}

\address{ {
$^{*}$ Corresponding author  \\
Jujie Wu} \
\\{\it E-mail address:}  jujie.wu@ntnu.no
\\{ School of Mathematics and statistics, Henan University}\\{ Jinming Campus of Henan University, Jinming District, City of Kaifeng, Henan Province. P. R. China, 475001}   \\ \ \ \ \  \ \ \ \ \ \ \ \ \ \ \ \ \ \ \ \ \ \ \ \ \ \ \ \ \ \ \  \ \ \ \ \ \ \  \ \ \ \ \ \ \  \ \ \ \ \ \ \   \ \ \    \ \ \ \ \ \ \ \ \ \  \ \ \ \ \ \ \ \ \ \  \ \ \ \ \ \ \ \ \ \ \ \ \ \ \ \ \ \ \ \ \ \ \ \ \ \ \ \ \ \ \ \  \ \ \ \ \ \ \ \ \ \ \ \ \ \ \ \ \ \ \  \ \ \ \ \ \ \ \ \ \ \ \ \ \ \ \ \ \ \ \ \ \ \ \ \ \ \ \ \ 
 \  \ \ \  \ \ \ \ \ \ \ \  \ \ \ \ \ \ \ \ \ \ \ \ \    \\{Department of Mathematical Sciences, NTNU}\\{ Sentralbygg 2, Alfred Getz vei 1, 7034 Trondheim, Norway} }

%\address{{Jujie Wu}
%\\{\it E-mail address:} jujie.wu@ntnu.no
%\\{ Department of Mathematical Sciences, NTNU}\\{ Sentralbygg 2, Alfred Getz vei 1, 7034 Trondheim, Norway}}

\address{{John Erik Forn\ae ss}
\\{\it E-mail address:} john.fornass@ntnu.no
\\{ Department of Mathematical Sciences, NTNU}\\{ Sentralbygg 2, Alfred Getz vei 1, 7034 Trondheim, Norway}}

\date{}
\maketitle

\bigskip

\begin{abstract}
We prove that if $\{ \varphi_j\}_j$ is a sequence of subharmonic functions which are increasing to some subharmonic function $\varphi$ in $\mathbb{C}$, then the union of all the weighted Hilbert spaces $H(\varphi_j)$ is  dense in the weighted Hilbert space $H(\varphi)$. 

\bigskip

\noindent{{\sc Mathematics Subject Classification} (2010):30D20, 30E10, 30H50, 31A05}

\smallskip

\noindent{{\sc Keywords}:} Holomorphic approximation; subharmonic function; Bergman kernel; $L^2$-estimate 
\end{abstract}
\tableofcontents

\section{Introduction}

Suppose $\varphi $ is a measurable function, locally bounded above on a domain $\Omega \subset \mathbb{C}^n$. 
%Denote by $L^2(\Omega ,\varphi)$ the space of measurable functions $f$ in $\Omega $ which are square integrable with respect to the measure $e^{-\varphi}d\lambda$, 
%i.e.,
%$$
%\|f\|_\varphi^2 : = \int_{\Omega } |f|^2 e^{-\varphi}d\lambda < + \infty,
%$$
 Set 
$$
H(\Omega, \varphi) : = \{ f \in\mathcal{O}(\Omega) : %\|f\|^2_{L^2(\Omega, \varphi)} 
\int_{\Omega}|f|^2e^{-\varphi}d\lambda < + \infty  \}
$$
where $\mathcal{O}(\Omega)$ stands for the space of holomorphic functions on $\Omega$ and $d\lambda$ is the Lebesgue measure. 
%$$
%H(\Omega, \varphi) = L^2(\Omega, \varphi) \cap \mathcal{O}(\Omega)
%$$   
%By $\|\cdot\|_{H(\Omega,\varphi)}$ we will denote the weighted $L^2$-norm and by $\langle \cdot, \cdot \rangle$ the scalar product in $L^2(\Omega, \varphi)$.  
Especially let $H(\varphi)$ denote the space of entire functions $f$ with $L^2(\mathbb{C}^n, \varphi)$ norm, i.e., $\|f\|^2_{L^2(\mathbb{C}^n, \varphi)}=\int_{\mathbb {C}^n}|f|^2e^{-\varphi}d\lambda < + \infty.$
 
 Since $f\in H(\Omega,\varphi)$,  the function $|f|^2$ is plurisubharmonic (psh) and $\varphi$ is locally bounded above, we have
\begin{eqnarray} \label{ineq:subpro}
| f(w)| \le \frac {C_n}{r^n} \|f\|_{L^2(B(w,r),0)} \le  \frac {C'_n}{r^n} \|f\|_{H(\Omega,\varphi)},\end{eqnarray}
if the ball  $B(w,r) \subset \subset \Omega$ and for $K \subset \subset \Omega$ 
$$
\sup\limits_{K} |f| \le C \|f\|_{H(\Omega,\varphi)},
$$
where $C$ depends only on $K$ and $\Omega$. So $H(\Omega,\varphi)$ is a closed subspace of $L^2(\Omega, \varphi)$ and thus a Hilbert space. Let $ K_{\Omega, \varphi} (z, w)$ denote the weighted Bergman kernel corresponding to the Hilbert space $H(\Omega,\varphi)$. If $\varphi = 0$, then $K_\Omega (z, w): = K_{\Omega, 0}(z, w) $ is the classical kernel introduced by Stefan Bergman.

 In 1971, B. A. Taylor \cite{Taylor1971} investigated weighted approximation results for entire functions on $\mathbb C^n$. He proved:

\begin{theorem}
Let $\varphi_1\le \varphi_2\le \varphi_3 \le \cdots$ be psh functions on $\mathbb C^n,$ assume
$\varphi=\lim\limits_{j\rightarrow \infty} \varphi_j$ is psh, and suppose that
$\int_Ke^{-\varphi_1}d\lambda<\infty$ for every compact set $K.$ Then the closure of $\bigcup \limits _{j=1}^\infty H(\varphi_j+\log(1+\|z\|^2))$  in the Hilbert space $L^2( \varphi + \log(1+\|z\|^2))$ contains $H(\varphi)$.
\end{theorem}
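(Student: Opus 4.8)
The plan is to approximate a given $f\in H(\varphi)$ directly, by multiplying it with a cut-off function and then correcting the result to an entire function by solving a $\bar\partial$-equation whose solution is controlled in the \emph{stronger} norm of $L^2(\mathbb{C}^n,\varphi_j+\log(1+\|z\|^2))$. Throughout write $\psi:=\log(1+\|z\|^2)$, and record three elementary facts: since $\psi\ge 0$ one has $H(\varphi)\subset H(\varphi+\psi)$; since $\varphi_j\le\varphi$ one has $e^{-\varphi-\psi}\le e^{-\varphi_j-\psi}$, so $\|h\|_{L^2(\varphi+\psi)}\le\|h\|_{L^2(\varphi_j+\psi)}$ for every $h$; and $i\partial\bar\partial\psi$ is the strictly positive Fubini--Study form. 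Fix $f\in H(\varphi)$ and $\varepsilon>0$. As $\int|f|^2e^{-\varphi-\psi}\,d\lambda$ and $\int|f|^2e^{-\varphi}\,d\lambda$ both converge, I first choose $R$ so large that the two tails $\int_{\|z\|\ge R}|f|^2e^{-\varphi-\psi}\,d\lambda$ and $\int_{\|z\|\ge R}|f|^2e^{-\varphi}\,d\lambda$ are each smaller than $\varepsilon^2$.

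Let $\chi_R$ be a cut-off equal to $1$ on $B(0,R)$, supported in $B(0,2R)$, with $|\bar\partial\chi_R|\le C/R$ supported on the compact annulus $A_R=\{R\le\|z\|\le 2R\}$. The $(0,1)$-form $v:=f\,\bar\partial\chi_R$ is smooth, compactly supported and $\bar\partial$-closed (because $\bar\partial f=0$), so I may solve $\bar\partial u=v$ on $\mathbb{C}^n$ by Hörmander's $L^2$-estimate against the plurisubharmonic weight $\varphi_j+\psi$. Lower-bounding the Hessian of the weight by the Fubini--Study form, $i\partial\bar\partial(\varphi_j+\psi)\ge i\partial\bar\partial\psi$, gives a solution $u$ with
\begin{eqnarray*}
\int_{\mathbb{C}^n}|u|^2e^{-\varphi_j-\psi}\,d\lambda\ \le\ \int_{A_R}\big\langle(i\partial\bar\partial\psi)^{-1}v,\,v\big\rangle\,e^{-\varphi_j-\psi}\,d\lambda .
\end{eqnarray*}
The decisive point is a scaling cancellation: $\bar\partial\chi_R$ lies in the radial codirection $\bar\partial\|z\|^2$, along which the inverse Fubini--Study metric contributes a factor comparable to $(1+\|z\|^2)$; together with $e^{-\psi}=(1+\|z\|^2)^{-1}$ and $|\bar\partial\chi_R|^2\le C/R^2$ this produces, on $A_R$, a bounded integrand after the powers of $R$ cancel, so that
\begin{eqnarray*}
\|u\|_{L^2(\varphi_j+\psi)}^2\ \le\ C\int_{A_R}|f|^2e^{-\varphi_j}\,d\lambda ,
\end{eqnarray*}
with $C$ independent of $R$ and $j$.

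Now fix this $R$ and let $j\to\infty$. On the compact set $A_R$ the holomorphic function $f$ is bounded, and by hypothesis $\int_{A_R}e^{-\varphi_1}\,d\lambda<\infty$; since $e^{-\varphi_j}\downarrow e^{-\varphi}$ is dominated by the integrable $e^{-\varphi_1}$, dominated convergence yields $\int_{A_R}|f|^2e^{-\varphi_j}\,d\lambda\to\int_{A_R}|f|^2e^{-\varphi}\,d\lambda<\varepsilon^2$. I therefore fix $j$ so large that $\int_{A_R}|f|^2e^{-\varphi_j}\,d\lambda<2\varepsilon^2$, whence $\|u\|_{L^2(\varphi_j+\psi)}\le C\varepsilon$. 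Put $g:=\chi_R f-u$. Then $\bar\partial g=v-\bar\partial u=0$, so $g$ is entire; moreover $g\in H(\varphi_j+\psi)$, because $\chi_R f$ is compactly supported and hence of finite $(\varphi_j+\psi)$-norm (again using $\int_{B(0,2R)}e^{-\varphi_1}\,d\lambda<\infty$), while $u\in L^2(\varphi_j+\psi)$. Finally, using the norm comparison from the first paragraph,
\begin{eqnarray*}
\|f-g\|_{L^2(\varphi+\psi)}\ \le\ \|(1-\chi_R)f\|_{L^2(\varphi+\psi)}+\|u\|_{L^2(\varphi+\psi)}\ \le\ \varepsilon+\|u\|_{L^2(\varphi_j+\psi)}\ \le\ C\varepsilon .
\end{eqnarray*}
As $\varepsilon>0$ was arbitrary and $g\in\bigcup_j H(\varphi_j+\psi)$, this places $f$ in the closure, proving the theorem.

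The crux of the argument --- and the step I expect to fight with --- is the estimate for $u$, where two apparently conflicting requirements must be met simultaneously: $u$ must be small in the limiting norm $L^2(\varphi+\psi)$ in order to approximate $f$, yet must have finite norm in the genuinely stronger space $L^2(\varphi_j+\psi)$ in order to certify that $g$ belongs to some $H(\varphi_j+\psi)$. This is exactly where the weight $\log(1+\|z\|^2)$ is indispensable: its curvature decays at infinity in Euclidean terms, but it is the potential of the Fubini--Study metric, and the resulting loss in the $\bar\partial$-estimate is compensated to the last power of $R$ by the decay of $e^{-\log(1+\|z\|^2)}$, leaving the scale-invariant quantity $\int_{A_R}|f|^2e^{-\varphi_j}$. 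The local integrability hypothesis $\int_K e^{-\varphi_1}\,d\lambda<\infty$ enters twice and is essential: once to justify the passage to the limit in $j$ over the compact annulus, and once to ensure that the compactly supported piece $\chi_R f$ already lies in $H(\varphi_j+\psi)$. A routine technical wrinkle is that $\varphi_j$ need not be smooth; since all the positive curvature used in the estimate comes from the smooth weight $\psi$, this is handled by the standard approximation of $\varphi_j$ from above by smooth plurisubharmonic functions before invoking Hörmander's theorem.
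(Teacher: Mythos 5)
Your argument is correct, and it is essentially the classical H\"ormander-type proof of Taylor's theorem; note that the paper does not actually reprove this statement (it is quoted from \cite{Taylor1971}), so the meaningful comparison is with the proofs the paper gives for its refinements. There the cutoff is not your annulus cutoff $\chi_R$ at scale $R$ but the much slower $\chi\left(\log(-\psi)+\log\epsilon\right)$ with $\psi=-\log\log(e+|z|^2)$, combined with Berndtsson's twisted estimate (Lemma \ref{Boberndtsson2001}) and, crucially, the strong openness theorem (Theorem \ref{th:opennessconjecture}): in Theorem \ref{th:main2} and in Lemma \ref{ConditionB} only a small multiple ($\epsilon$, resp.\ $\alpha_j/2$) of the Fubini--Study weight is available, and there is no hypothesis $\int_K e^{-\varphi_1}\,d\lambda<\infty$, so finiteness of $\int_K|f|^2e^{-\varphi_j}\,d\lambda$ must be extracted from strong openness rather than from convergence theorems. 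Your setting is easier on both counts, and you exploit that correctly: the full weight $\log(1+\|z\|^2)$ supplies Fubini--Study curvature at unit strength, so the plain annulus cutoff closes the scaling exactly, and the integrability of $e^{-\varphi_1}$ on compacts lets monotone/dominated convergence replace strong openness (it also, as you note, guarantees $\chi_Rf\in L^2(\varphi_j+\psi)$ and the finiteness of the right-hand side of the $\bar\partial$-estimate for every $j$). What your route buys is elementarity --- no openness theorem, no twisting; what the paper's heavier machinery buys is precisely what your cutoff cannot: shrinking the auxiliary weight to $\epsilon\log(1+\|z\|^2)$ or to the measure-dependent $\frac{\alpha_j}{2}\log(e+|z|^2)$, and dropping the hypothesis on $e^{-\varphi_1}$, since with less than the full FS factor the radial loss in your estimate no longer cancels.

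One bookkeeping slip, which does not affect the conclusion: the inverse Fubini--Study metric acts on the radial codirection $\bar\partial\|z\|^2$ with eigenvalue comparable to $(1+\|z\|^2)^2$, not $(1+\|z\|^2)$ (the tangential eigenvalue is $(1+\|z\|^2)$; the radial one is the worse and is the relevant one for $\bar\partial\chi_R$). The correct accounting on $A_R$ is $(1+R^2)^2\cdot R^{-2}\cdot(1+R^2)^{-1}=O(1)$, which yields exactly your claimed scale-invariant bound $\|u\|^2_{L^2(\varphi_j+\psi)}\le C\int_{A_R}|f|^2e^{-\varphi_j}\,d\lambda$ with $C$ independent of $R$ and $j$; since the cancellation is exact rather than with room to spare, it is worth stating the eigenvalue correctly, as the whole proof hinges on this one identity.
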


In \cite{Fornaess2017} we improved Taylor's result as follows 
\begin{theorem}  \label{th:main2}
Let $\varphi_1\leq \varphi_2\leq \varphi_3\leq \cdots $ be psh functions on $\mathbb{C}^n$ with psh limit. For any $\epsilon > 0,$ let $\widetilde{\varphi}_j = \varphi_j +  \epsilon \log(1+\|z\|^2)$ and $\widetilde{\varphi} = \lim\limits_{j\rightarrow+\infty} \widetilde{\varphi }_j$. Then $\bigcup \limits _{j=1}^\infty H(\widetilde{\varphi}_j)$ is dense in $H(\widetilde{\varphi})$.
\end{theorem}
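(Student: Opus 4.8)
The plan is to prove the theorem by duality, reducing density to the triviality of an orthogonal complement, the genuine analytic input being an $L^2$-estimate for $\bar\partial$ whose positivity comes entirely from the term $\epsilon\log(1+\|z\|^2)$. First I would record the nesting: since $\varphi_j\le\varphi_{j+1}\le\varphi$, the weights decrease, $e^{-\widetilde\varphi}\le e^{-\widetilde\varphi_{j+1}}\le e^{-\widetilde\varphi_j}$, so $H(\widetilde\varphi_1)\subseteq H(\widetilde\varphi_2)\subseteq\cdots\subseteq H(\widetilde\varphi)$ and $V:=\bigcup_j H(\widetilde\varphi_j)$ is an increasing union of subspaces of $H(\widetilde\varphi)$. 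Density of $V$ is then equivalent to the assertion that every $g\in H(\widetilde\varphi)$ with $\langle h,g\rangle_{\widetilde\varphi}=0$ for all $h\in V$ must vanish. Equivalently, via $K_{\widetilde\varphi_j}(w,w)=\sup\{|h(w)|^2:h\in H(\widetilde\varphi_j),\,\|h\|_{\widetilde\varphi_j}\le1\}$, which increases in $j$ and is $\le K_{\widetilde\varphi}(w,w)$, density amounts to the Bergman-kernel convergence $K_{\widetilde\varphi_j}(w,w)\to K_{\widetilde\varphi}(w,w)$ for every $w$; I would keep this reformulation available.

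The decisive step is a change of weight. Put $\rho_j:=\varphi-\varphi_j\ge0$, so $\rho_j\downarrow0$ and $e^{-\widetilde\varphi}=e^{-\rho_j}e^{-\widetilde\varphi_j}$. For $h\in H(\widetilde\varphi_j)$ the condition $\langle h,g\rangle_{\widetilde\varphi}=0$ reads $\int h\,\overline{g}\,e^{-\rho_j}e^{-\widetilde\varphi_j}\,d\lambda=0$, that is, $g\,e^{-\rho_j}\perp H(\widetilde\varphi_j)$ in $L^2(\widetilde\varphi_j)$. Here $g e^{-\rho_j}\in L^2(\widetilde\varphi_j)$, since $\int|g|^2e^{-2\rho_j}e^{-\widetilde\varphi_j}\,d\lambda=\int|g|^2e^{-\rho_j}e^{-\widetilde\varphi}\,d\lambda\le\|g\|_{\widetilde\varphi}^2$, and monotone convergence gives $\|g e^{-\rho_j}\|_{\widetilde\varphi_j}\to\|g\|_{\widetilde\varphi}$. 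Thus the hypothesis is exactly $P_j(g e^{-\rho_j})=0$ for all $j$, where $P_j$ is the Bergman projection onto $H(\widetilde\varphi_j)$. Since $P_j(ge^{-\rho_j})=0$ forces $\mathrm{dist}_{\widetilde\varphi_j}(g e^{-\rho_j},H(\widetilde\varphi_j))=\|g e^{-\rho_j}\|_{\widetilde\varphi_j}$, and the right side tends to $\|g\|_{\widetilde\varphi}$, it suffices to prove the opposite inequality $\mathrm{dist}_{\widetilde\varphi_j}(g e^{-\rho_j},H(\widetilde\varphi_j))\to0$; this would give $\|g\|_{\widetilde\varphi}=0$ and finish the proof.

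To produce a holomorphic competitor I would solve a $\bar\partial$-problem globally. Since $g$ is entire, $\bar\partial(g e^{-\rho_j})=-g e^{-\rho_j}\,\bar\partial\rho_j=-g\,\bar\partial(e^{-\rho_j})$ is $\bar\partial$-closed, and I would solve $\bar\partial u_j=-g\,\bar\partial(e^{-\rho_j})$ in $L^2(\widetilde\varphi_j)$ by Hörmander's theorem; then $h_j:=g e^{-\rho_j}-u_j\in H(\widetilde\varphi_j)$ and $\mathrm{dist}_{\widetilde\varphi_j}(ge^{-\rho_j},H(\widetilde\varphi_j))\le\|u_j\|_{\widetilde\varphi_j}$. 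The crucial structural fact is that $i\partial\bar\partial\widetilde\varphi_j=i\partial\bar\partial\varphi_j+\epsilon\,i\partial\bar\partial\log(1+\|z\|^2)\ge\epsilon\,i\partial\bar\partial\log(1+\|z\|^2)>0$, so each $\widetilde\varphi_j$ is strictly plurisubharmonic with a curvature lower bound that is independent of $j$ and supplied entirely by the $\epsilon$-term; the merely plurisubharmonic $\varphi_j$ contribute no positivity and enter only through $\rho_j\downarrow0$. Hörmander's estimate then bounds $\|u_j\|_{\widetilde\varphi_j}^2$ by $\int|g|^2\,\big|\bar\partial(e^{-\rho_j})\big|^2_{(i\partial\bar\partial\widetilde\varphi_j)^{-1}}\,e^{-\widetilde\varphi_j}\,d\lambda$, and I would aim to show this integral tends to $0$.

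The main obstacle is exactly the convergence of that last integral to $0$, and it is delicate for two reasons. First, $\rho_j=\varphi-\varphi_j$ is only a difference of plurisubharmonic functions, so $\bar\partial\rho_j$ has no pointwise bound and $|\bar\partial(e^{-\rho_j})|^2$ cannot be estimated naively by the curvature. I expect to handle this not by differentiating $\rho_j$ directly, but by regularizing the $\varphi_j$ (decreasing smooth plurisubharmonic approximations), running the estimate with $C^\infty$ weights, and passing to the limit; the natural mechanism is a twisted, Donnelly--Fefferman/Berndtsson-type estimate in which the factor $e^{-\rho_j}$ (bounded by $1$, increasing to $1$) is absorbed so that the gradient of $\rho_j$ is never weighted more than the available curvature, after which dominated convergence against the fixed integrable density $|g|^2 e^{-\widetilde\varphi}$ forces the bound to $0$. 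Second, and relatedly, the theorem assumes no local integrability of $e^{-\varphi_1}$ (unlike Taylor's hypothesis), so a bare cutoff $\chi_R g$ need not lie in $L^2(\widetilde\varphi_j)$; the point of the weight change by $e^{-\rho_j}$ is precisely to suppress the mass of $g$ near the deep singularities of $\varphi_j$, while success is measured only in the weaker norm $\|\cdot\|_{\widetilde\varphi}$, so that this suppression becomes asymptotically free as $\rho_j\downarrow0$. Reconciling the uncontrolled $\bar\partial\rho_j$ with the decay of the curvature of $\log(1+\|z\|^2)$ at infinity, uniformly as $j\to\infty$, is where I expect the real work --- and the essential use of $\epsilon>0$ --- to lie.
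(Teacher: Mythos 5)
Your duality reduction is sound: the identity $\|g e^{-\rho_j}\|_{\widetilde\varphi_j}^2=\int|g|^2e^{-\rho_j}e^{-\widetilde\varphi}\,d\lambda\to\|g\|_{\widetilde\varphi}^2$, the equivalence of orthogonality with $P_j(ge^{-\rho_j})=0$, and the observation that $\mathrm{dist}_{\widetilde\varphi_j}(ge^{-\rho_j},H(\widetilde\varphi_j))\to0$ would force $g=0$ are all correct. But the step you defer to "the real work" --- solving $\bar\partial u_j=-g\,\bar\partial(e^{-\rho_j})$ by H\"ormander and driving the right-hand side to zero --- is not a technical detail to be absorbed by regularization; as set up, it fails. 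Since psh functions lie in $W^{1,p}_{\mathrm{loc}}$ only for $p<2$, the datum $\bar\partial\rho_j$ need not be in $L^2_{\mathrm{loc}}$, and the weight factors cancel in exactly the wrong way. A concrete instance in $n=1$: take $\varphi(z)=\log|z|+|z|^2$, radii $r_j\downarrow0$, masses $\epsilon_j=c\,r_j^2$, and $\rho_j$ a smoothed version of $\epsilon_j\log^+\frac{r_j}{|z|}$, equal to $\epsilon_j\log\frac{r_j}{|z|}$ on $\{|z|<r_j/2\}$ and $0$ outside $\{|z|<r_j\}$. The negative annulus part of $\Delta\rho_j$ has density $O(\epsilon_j/r_j^2)$, which is absorbed by $\Delta(|z|^2)=4$ when $c$ is small, so $\varphi_j:=\varphi-\rho_j$ is subharmonic and increases to $\varphi$; each $\varphi_j$ has Lelong number $1+\epsilon_j$ at the origin, decreasing to $1$, which is entirely compatible with monotone convergence. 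Near $0$ one has $|\bar\partial\rho_j|^2\asymp\epsilon_j^2|z|^{-2}$, $e^{-2\rho_j}e^{-\widetilde\varphi_j}\asymp|z|^{-1}(|z|/r_j)^{\epsilon_j}$, and the inverse curvature form is bounded there (the $|z|^2$ term gives a positive lower bound), so your H\"ormander integrand is $\asymp|g|^2\epsilon_j^2|z|^{-3+\epsilon_j}$, whose integral diverges for \emph{every} $j$ as soon as $g(0)\neq0$ --- and $g(0)\neq0$ is permitted since $e^{-\widetilde\varphi}\asymp|z|^{-1}$ is locally integrable. No mollification of the weights can rescue this: any estimate uniform in the mollification parameter would bound the limit integral, which is genuinely infinite. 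The bound $e^{-\rho_j}\le1$ gives no control whatsoever of $\bar\partial\rho_j$, and there is no candidate inequality behind the hoped-for "absorption".

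The actual proof (in \cite{Fornaess2017}; the same scheme is reproduced in this paper in the proof of Lemma \ref{ConditionB}) avoids differentiating the weights altogether, and this is the idea your proposal is missing. One fixes $f\in H(\widetilde\varphi)$, sets $\psi=-\log\left(\log(e+\|z\|^2)\right)$, and takes as $\bar\partial$-datum $v_\delta=f\,\bar\partial\chi\left(\log(-\psi)+\log\delta\right)$, the derivative of a fixed smooth cutoff supported in the distant shell $\{\frac{1}{2\delta}\le-\psi\le\frac1\delta\}$, where everything is smooth relative to the weights. Berndtsson's twisted estimate (Lemma \ref{Boberndtsson2001}), with the positivity supplied by the curvature of $\epsilon\log(1+\|z\|^2)$ exactly as you anticipated, yields $u_{j,\delta}$ with $\|u_{j,\delta}\|^2_{\widetilde\varphi_j}\le C\,\delta^2\int_K|f|^2e^{-\widetilde\varphi_j}\,d\lambda$, where $K=\{-\psi\le\frac1\delta\}$. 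The finiteness of that right-hand side for all large $j$ is precisely what the strong openness theorem of Guan--Zhou (Theorem \ref{th:opennessconjecture}) delivers --- it is the pivotal ingredient, nowhere present in your proposal, and it plays the role your H\"ormander step cannot: it tames the discrepancy between the singularities of $\varphi_j$ and $\varphi$ without ever estimating $\bar\partial\rho_j$. Then $F_{j,\delta}=f\cdot\chi(\cdot)-u_{j,\delta}\in H(\widetilde\varphi_j)$, one lets $j\to\infty$ (monotone convergence) and then $\delta\to0$ to conclude density directly, making the duality detour unnecessary. If you want to salvage your framework, you would still need strong openness to certify any weighted finiteness in the smaller spaces; at that point the direct approximation of $f$ is both shorter and stronger.
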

It is an important question whether this theorem is true or false when $\epsilon = 0$. Here by using some potential theoretic properties of subharmonic functions we show that it holds in one dimension. 

\begin{theorem} \label{th:main1}
Let $\varphi_1\leq \varphi_2\leq \varphi_3\leq \cdots $ be subharmonic functions on $\mathbb{C}$. Suppose that $\varphi = \lim\limits_{k} \varphi_k$ and $\varphi$ is locally bounded above. Then $\bigcup \limits _{k=1}^\infty H(\varphi_k)$ is dense in $H(\varphi)$.
\end{theorem}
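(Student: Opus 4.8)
The plan is to prove density by direct construction: given $f\in H(\varphi)$ and $\delta>0$, I would produce an entire function $F$ lying in a single space $H(\varphi_k)\subset\bigcup_j H(\varphi_j)$ with $\|f-F\|_{L^2(\C,\varphi)}<\delta$. Since $\varphi_k\le\varphi$ forces $H(\varphi_k)\subset H(\varphi)$ and the spaces increase with $k$, it suffices to reach one finite level. The natural first move is to localise: fix a smooth cutoff $\chi=\chi_R$ that is $1$ on $B(0,R)$ and $0$ off $B(0,2R)$, so that $\chi f$ is compactly supported and $\|(1-\chi)f\|_{L^2(\C,\varphi)}\to0$ as $R\to\infty$. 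Of course $\chi f$ is not holomorphic, so I would correct it by solving $\bar\partial u=\beta$ with $\beta:=f\,\bar\partial\chi$, which is supported in the annulus $A_R=\{R\le|z|\le2R\}$; then $F:=\chi f-u$ is entire, and the problem becomes choosing a solution $u$ that is simultaneously small in $L^2(\C,\varphi)$ and square-integrable against the smaller weight $e^{-\varphi_k}$.

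The analytic engine is the one-variable weighted $L^2$-estimate for $\bar\partial$: for a subharmonic weight $\psi$ one can solve $\bar\partial u=\beta$ with
\[
\int_{\C}|u|^2e^{-\psi}\,d\lambda\ \le\ \int_{\C}\frac{|\beta|^2}{\Delta\psi}\,e^{-\psi}\,d\lambda .
\]
Taking $\psi=\varphi_k$ is exactly what yields membership in $H(\varphi_k)$: because $\beta$ is compactly supported, the right-hand side is a finite integral over $A_R$, so the minimal solution automatically satisfies $\int|u|^2e^{-\varphi_k}<\infty$, and then $F=\chi f-u$ has $\int|F|^2e^{-\varphi_k}<\infty$ once also $\int|\chi f|^2e^{-\varphi_k}<\infty$. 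The latter holds for large $k$, since $\int_{B(0,2R)}|\chi f|^2e^{-\varphi_k}\,d\lambda$ decreases to the finite limit $\int_{B(0,2R)}|\chi f|^2e^{-\varphi}\,d\lambda$ by monotone convergence. Smallness is then essentially free, because $e^{-\varphi}\le e^{-\varphi_k}$ gives $\|u\|_{L^2(\C,\varphi)}\le\|u\|_{L^2(\C,\varphi_k)}$, so it is enough to make the Hörmander bound small. Here I would first fix $R$ (and spread out the gradient of $\chi$ over a wide annulus, to offset the possible smallness of $\Delta\varphi$ on $A_R$), and only afterwards send $k\to\infty$, using that on the fixed compact annulus the quantities $\Delta\varphi_k$ and $e^{-\varphi_k}$ approach those of $\varphi$. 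The normalisation $\int_{\C}\beta\,d\lambda=\int_{\C}\bar\partial(\chi f)\,d\lambda=0$ additionally forces the explicit Cauchy-transform solution to decay like $|z|^{-2}$, a useful cross-check on integrability at infinity.

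The main obstacle is the positivity required to run the estimate: the bound is vacuous wherever $\Delta\varphi_k=0$ on $\mathrm{supp}\,\beta$, and the weights $\varphi_k$ may be harmonic on a neighbourhood of every large annulus (precisely when the Riesz mass $\mu_k=\tfrac1{2\pi}\Delta\varphi_k$ is compactly supported). One cannot repair this by adding a bounded strictly subharmonic bump, since a subharmonic function bounded above on $\C$ is constant. This is the step where one-dimensional potential theory is indispensable. I would invoke the Riesz representation $\varphi=\int\log|\,\cdot\,-\zeta|\,d\mu(\zeta)+h$ and treat the harmonic regime by hand: on an annulus where $\varphi_k$ is harmonic it is, up to a single $c\log|z|$ term, the real part of a holomorphic function, so multiplying by the corresponding nonvanishing holomorphic factor gauges the weight away and reduces $\bar\partial u=\beta$ to the unweighted equation, which is always solvable with estimates, the logarithmic potential then controlling the transplanted solution's growth. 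Reconciling this local gauging with global $L^2(\C,\varphi_k)$ control—and in particular handling the heavy poles of $\varphi_k$, where instead $\Delta\varphi_k$ is large and the weighted estimate applies cleanly while the explicit solution would diverge—is the delicate heart of the argument, and is where the facts that we work on $\C$ and that $\varphi$ is a genuine increasing limit of subharmonic functions are used in an essential way.

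As a consistency check I would keep the dual formulation in mind: density of $\bigcup_k H(\varphi_k)$ in $H(\varphi)$ is equivalent to the assertion that no nonzero $g\in H(\varphi)$ is orthogonal, in the $H(\varphi)$ inner product, to every $H(\varphi_k)$. Writing $g_k=g\,e^{\varphi_k-\varphi}\in L^2(\C,\varphi_k)$, orthogonality says each $g_k$ annihilates the holomorphic subspace, and since the reproducing kernels $K_{\C,\varphi_k}(\cdot,w)$ lie in $\bigcup_k H(\varphi_k)$, the theorem amounts to the potential-theoretic convergence of these kernels to $K_{\C,\varphi}(\cdot,w)$. I expect the constructive route above to be the more tractable one, with the harmonic-annulus analysis being the point that genuinely requires care.
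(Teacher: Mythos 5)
There are two genuine gaps here, one of faulty reasoning and one of a missing key idea. First, your justification that $\int_{B(0,2R)}|\chi f|^2e^{-\varphi_k}\,d\lambda<\infty$ for large $k$ ``by monotone convergence'' is wrong: the integrals do decrease as $k$ grows, but a decreasing sequence of integrals can be identically $+\infty$ even though the pointwise limit integrand is integrable; monotone convergence yields finiteness only once some term is already finite, which is exactly what you are trying to establish. Finiteness of $\int_K|f|^2e^{-\varphi_k}\,d\lambda$ for large $k$ is precisely the strong openness property of Guan--Zhou (quoted in the paper as Theorem \ref{th:opennessconjecture}), a deep theorem in which holomorphy of $f$ is essential --- the statement fails for general measurable densities. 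The paper invokes strong openness at exactly this point, and any proof along your lines must as well; the same issue reappears when you want the right-hand side of your H\"ormander estimate, which involves $e^{-\varphi_k}$ on the annulus, to be finite at all.

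Second, the step you yourself flag as ``the delicate heart'' --- solving $\overline{\partial}u=f\,\overline{\partial}\chi$ when $\Delta\varphi_k$ vanishes identically near the cutoff --- is left as a sketch, and it is precisely where the paper's actual idea lives. If $\mu=\frac{1}{2\pi}\Delta\varphi$ is supported in the unit disc, then the weighted estimate with weight $\varphi_k$ is vacuous on \emph{every} annulus $A_R$, $R\ge 2$, no matter how widely you spread the gradient of $\chi$, and your gauging proposal does not close: on an annulus the term $c\log|z|$ admits no single-valued holomorphic gauge unless $c\in\mathbb{Z}$, and, more seriously, a local unweighted solution must still be glued into one entire correction that lies in $L^2(\mathbb{C},\varphi_k)$ globally and is small in $L^2(\mathbb{C},\varphi)$, which your outline does not supply. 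The paper's mechanism is different: it first disposes of the case where $\mu$ is zero or purely atomic (there each $\varphi_j=\phi+c_j$ for constants $c_j\nearrow 0$, so density is immediate), and otherwise uses the non-atomic part of $\mu$ to find a disc $\Delta(R)$ carrying mass $\alpha>0$ whose slightly larger disc carries mass $\beta<2$ (Condition (B), via Lemma \ref{ConditionB} and the lemma following it). Two comparison propositions (Propositions \ref{prop phipsi} and \ref{prop psi phi}), resting on the estimate $\int_{\Delta(R)}e^{-\phi_1}\,d\lambda\le 28R^2/(2-\alpha)$ for logarithmic potentials of mass $\alpha<2$ (Theorem \ref{ co:estimateforintegral}), show that the local Riesz potential in the weight may be replaced by $\frac{\alpha_j}{2}\log(1+|z|^2)$ with equivalent norms. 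This manufactures uniform logarithmic growth at infinity out of merely local Riesz mass, reducing the $\epsilon=0$ problem to the earlier $\epsilon>0$ theorem (Theorem \ref{th:main2}), which is then run with Berndtsson's estimate (Lemma \ref{Boberndtsson2001}) and the gain $\psi=-\log\log(e+|z|^2)$, the cutoff placed at doubly exponential scale so that its $\overline{\partial}$ contributes the factor $\epsilon^2$, with $j\to\infty$ taken before $\epsilon\to0$. Nothing in your proposal reproduces this substitution, and without it (or an equivalent device) the construction does not go through.
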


 %I think we can assume in all the results that $\varphi_1\leq \varphi_2\leq \cdots$
%converge to a function $\varphi$ which is locally bounded. And we can call
%the upper regularization $\phi$. 

\begin{remark}
Let $\varphi = \lim \limits_k \varphi_k$. We define $\phi = \varphi ^{\ast} : = \limsup\limits _{\zeta \rightarrow z } \varphi(\zeta)$, $ z \in \mathbb{C}$, which is the upper regularization of $\varphi$. Then $\phi$ is subharmonic, $\varphi\leq \phi$, $\varphi = \phi $ almost everywhere on $\mathbb{C}$ and we have 
$H(\varphi) = H(\phi)$ (See Theorem 3.4.2 in \cite{Rans95}).
\end{remark}

We need the strong openness theorem as follows, see \cite{guanzhou1401},  \cite{guanzhou1311}, \cite{guanzhou1501}, \cite{Lempert2017}.

%Our proof will be based on the strong openness theorem in one dimension.  \textcolor{red}{ \sout{The openness theorem was originally conjectured by Demailly and Koll\'ar. It was first proved by Berndtsson \cite{Boberndtsson2013} , based on his famous work on subharmonicity of the Bergman kernel in the parameter. Recently, an approach based on H$\ddot{o}$rmander’s $L^2$-estimates of $\overline{\partial}$ to the openness conjecture was proposed by Chen  \cite{boyongChen-20151}.}} In 2015, Guan and Zhou \textcolor{blue}{\cite{guanzhou1401}}  \cite{guanzhou1311} \cite{guanzhou1501} proved the \textcolor{blue}{\sout{ following}} strong openness theorem where the Ohsawa-Takegoshi extension theorem plays a crucial role. \textcolor{blue}{But here for more convenience use we will adopt the form given by Lempert \cite{Lempert2017}, which give maybe more transparent proof than Guan-Zhou's.}

\begin{theorem}[Strong openness theorem]\label{th:opennessconjecture}
Let $V \subset \subset U \subset \mathbb{C}^n$ be an open set. Let $\varphi_1\leq \varphi_2\leq \varphi_3\leq \cdots $ be non-positive psh functions on $U$ such that $\varphi = \lim\limits_{k} \varphi_k$ and $\varphi$ is locally bounded above. If $f \in \mathcal{O} (U)$ such that 
$$\int _U |f|^2 e^{-\varphi} d \lambda < \infty,$$
then there exists $j_0$ so that when $j \geq j_0$ 
$$
\int_ V |f|^2 e^{ - \varphi_j} d\lambda < \infty.
$$  
\end{theorem}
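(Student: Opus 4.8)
\emph{Reduction to a local statement.} The plan is to prove a local version of the conclusion and then cover $\overline{V}$. Since $\overline{V}$ is compact and contained in $U$, it suffices to show that every $z_0 \in \overline{V}$ admits a ball $B' = B(z_0,r) \Subset U$ and an index $j_0$ with $\int_{B'} |f|^2 e^{-\varphi_j}\,d\lambda < \infty$ for all $j \ge j_0$; a finite subcover of $\overline{V}$, together with the maximum of the corresponding indices, then gives the global conclusion (this is where the monotonicity $\varphi_1 \le \varphi_2 \le \cdots$ is used, to collapse finitely many indices into one). So fix such a point and a concentric pair $B' \Subset B \Subset U$. The weights are already non-positive, so no normalization is needed, and we record the total mass $M := \int_B |f|^2 e^{-\varphi}\,d\lambda < \infty$.

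\emph{The main estimate via optimal $L^2$ extension.} The engine is the Ohsawa--Takegoshi extension theorem in its sharp form (Bł\kern-1pt ocki; Guan--Zhou, see \cite{guanzhou1401, guanzhou1311, guanzhou1501}). I would exhaust the polar set $\{\varphi = -\infty\}$ by the sublevel sets $B_t := \{z \in B : \varphi(z) < -t\}$, $t \ge 0$, whose shrinking is quantified by $M$ through the elementary bound $\int_{B_t} |f|^2\,d\lambda \le e^{-t} M$ (on $B_t$ one has $e^{-\varphi} > e^{t}$). Using the optimal extension theorem, the goal is to convert these mass bounds into a sequence of holomorphic approximants $F_m$ on $B$, converging to $f$ locally uniformly, whose $e^{-\varphi_j}$--mass on $B'$ stays bounded: concretely, at each level one extends the restriction of $f$ to (a smoothing of) $B_k = \{\varphi < -k\}$ to a holomorphic function on $B$ with the \emph{optimal} constant, and assembles these into $F_m$ by a telescoping sum. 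The gain $e^{-t}$ in the mass bound, together with the fact that for $j$ large $\varphi_j$ differs from $\varphi$ only by a controlled amount on the relevant shells $\{-(k+1)\le\varphi<-k\}$, is what makes the series defining the approximants converge uniformly in $m$, yielding
$$\int_B |F_m|^2 e^{-\varphi}\,d\lambda \le C' M \qquad\text{and}\qquad \int_{B'} |F_m|^2 e^{-\varphi_j}\,d\lambda \le C M, \qquad m = 1, 2, \dots,$$
with $C, C'$ independent of $m$ (and of $j$ for $j \ge j_0$).

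\emph{Passing to the limit.} Because $\varphi \le 0$, the first bound gives $\int_B |F_m|^2\,d\lambda \le C' M$, so by the mean value inequality \eqref{ineq:subpro} the $F_m$ are locally uniformly bounded and form a normal family. After extracting a subsequence $F_m \to F$ locally uniformly, the construction guarantees $F = f$. Lower semicontinuity of the $L^2$ norm under local uniform convergence (Fatou's lemma) then yields
$$\int_{B'} |f|^2 e^{-\varphi_j}\,d\lambda \le \liminf_{m\to\infty} \int_{B'} |F_m|^2 e^{-\varphi_j}\,d\lambda \le C M < \infty,$$
which is the desired local conclusion for the fixed $j$, hence for all larger indices.

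\emph{Main obstacle.} The entire difficulty sits in the second paragraph: obtaining the extension constant $C$ \emph{uniformly} in the truncation level is precisely what the optimal, rather than merely bounded, Ohsawa--Takegoshi constant buys, and it rests on the delicate curvature and weight bookkeeping of Guan--Zhou; one must simultaneously verify that the telescoped approximants converge back to $f$ rather than to some other holomorphic function. The hypothesis $\varphi_j \uparrow \varphi$ enters only here, through the summability of the correction terms, which also explains why one must pass to a relatively compact $B' \Subset U$ and to indices $j \ge j_0$: near $\partial U$, or for small $j$, the shells carry too much mass for the series to converge. As a fallback, one could instead first establish the $(1+\epsilon)\varphi$ form of strong openness (as in \cite{Lempert2017}) and then interpolate by H\"older's inequality; but the resulting estimate requires controlling $\varphi - \varphi_j \ge 0$ against $|f|^2$, which appears at least as delicate as the direct construction.
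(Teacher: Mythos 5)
The first thing to say is that the paper does not prove this theorem at all: it is quoted as a known result, with the proof deferred to \cite{guanzhou1401}, \cite{guanzhou1311}, \cite{guanzhou1501} and \cite{Lempert2017}. So the yardstick is whether your sketch is a self-contained argument, and it is not; moreover the one concrete mechanism you describe would fail. You propose to ``extend the restriction of $f$ to (a smoothing of) $B_k=\{\varphi<-k\}$'' to a holomorphic function on the ball $B$ via the optimal Ohsawa--Takegoshi theorem. But Ohsawa--Takegoshi extends from closed analytic subvarieties, not from open subsets, and in any case a holomorphic $F$ on the connected ball $B$ agreeing with $f$ on the nonempty open set $B_k$ is identically $f$ by the identity theorem; so the telescoped approximants $F_m$ either cannot be constructed this way or all coincide with $f$, and the uniform bounds $\int_{B'}|F_m|^2 e^{-\varphi_j}\,d\lambda\le CM$ on which your limiting argument rests are never produced. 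What Guan--Zhou actually do is structurally different: they combine truncations of the weight with an $L^2$ extension from a point or hyperplane in an auxiliary variable (Lempert instead uses plurisubharmonic variation of the Bergman spaces along a deformation of the weights), and the uniformity of the constant in the truncation level is the entire content of the theorem --- which your second paragraph explicitly black-boxes (``delicate curvature and weight bookkeeping'').

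There is a second genuine gap: the assertion that ``for $j$ large $\varphi_j$ differs from $\varphi$ only by a controlled amount on the shells $\{-(k+1)\le\varphi<-k\}$'' does not follow from pointwise monotone convergence. No Dini-type uniformity is available ($\varphi$ is not continuous, and infinitely many shells must be handled simultaneously); making this quantitative is precisely the \emph{effectiveness} statement of \cite{guanzhou1501}, where $j_0$ is controlled through the distribution function $t\mapsto\int_{\{\varphi<-t\}}|f|^2e^{-\varphi}\,d\lambda$. In other words, the increasing-sequence form stated in the paper is genuinely stronger than the basic $(1+\epsilon)$-openness, and your fallback via H\"older fails for the reason you half-suspect: $e^{\varphi-\varphi_j}\ge 1$ admits no integrable majorant, and H\"older interpolation only transfers integrability to weights lying \emph{between} $\varphi$ and $(1+\epsilon)\varphi$, whereas $\varphi_j$ need not be comparable to any $(1+\epsilon_j)\varphi$. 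To be fair, your first and third paragraphs are sound: the localization over $\overline V$ with a finite subcover and the monotonicity $e^{-\varphi_j}\le e^{-\varphi_{j_0}}$ for $j\ge j_0$ is exactly how one globalizes the local statement of \cite{guanzhou1501}, and the normal-family/Fatou passage to the limit is correct \emph{given} uniform bounds. So what you have is a correct reduction plus an accurate map of where the difficulty lies, but the core of the theorem remains unproved.
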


For convenience, we will use $K_{\varphi_j}(z, w)$(resp. $K_\varphi(z,w )$) to denote the weighted Bergman kernel corresponding to the Hilbert space $H(\varphi_j):= H(\mathbb{C}, \varphi_j)$ (resp. $H( \varphi): = H(\mathbb{C},  \varphi)$). As an application of the approximation theorem \ref{th:main1}, we will prove 

\begin{theorem} \label{th:bergmanconver}
Let $\varphi_1\leq \varphi_2\leq \varphi_3\leq \cdots $ be subharmonic functions on $\mathbb{C}$. Suppose that $\varphi = \lim\limits_{k} \varphi_k$ and $\varphi$ is locally bounded above. Then

$$
 \lim\limits_{j}K_{\varphi_j}(z,z ) =K_\varphi(z,z), \ \ \   \forall  z \in \mathbb{C}. 
$$
\end{theorem}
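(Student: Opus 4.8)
The plan is to combine the extremal (variational) description of the Bergman kernel on the diagonal with the density statement of Theorem \ref{th:main1}. Recall that for any weight $\psi$ the diagonal kernel admits the well-known characterization
$$K_\psi(z,z) = \sup\{\, |f(z)|^2 : f \in H(\psi),\ \|f\|_{H(\psi)} \le 1 \,\},$$
the supremum being attained by the normalized reproducing kernel. I would first record the easy half. Since the $\varphi_j$ increase to $\varphi$, the weights $e^{-\varphi_j}$ decrease to $e^{-\varphi}$, so $\|f\|_{H(\varphi)} \le \|f\|_{H(\varphi_{j+1})} \le \|f\|_{H(\varphi_j)}$ whenever these quantities are finite. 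Consequently the unit balls are nested, $\{\|f\|_{H(\varphi_j)}\le 1\} \subseteq \{\|f\|_{H(\varphi_{j+1})}\le 1\} \subseteq \{\|f\|_{H(\varphi)}\le 1\}$, and taking suprema of $|f(z)|^2$ yields $K_{\varphi_j}(z,z) \le K_{\varphi_{j+1}}(z,z) \le K_\varphi(z,z)$. Thus for each fixed $z$ the sequence $K_{\varphi_j}(z,z)$ is nondecreasing and bounded above, hence converges to some $L(z) \le K_\varphi(z,z)$. It remains to prove the reverse inequality $L(z) \ge K_\varphi(z,z)$.

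For the reverse inequality, fix $z$; we may assume $K_\varphi(z,z)>0$, otherwise both sides vanish. Let $g \in H(\varphi)$ be the extremal function, so $\|g\|_{H(\varphi)}=1$ and $|g(z)|^2 = K_\varphi(z,z)$. By Theorem \ref{th:main1} the union $\bigcup_k H(\varphi_k)$ is dense in $H(\varphi)$, so for any $\delta>0$ I can choose an index $j_0$ and $f \in H(\varphi_{j_0})$ with $\|f-g\|_{H(\varphi)} < \delta$. Two consequences follow: first, the pointwise estimate \eqref{ineq:subpro}, i.e.\ continuity of evaluation at $z$ on $H(\varphi)$, gives $|f(z)-g(z)| \le C\delta$, so $|f(z)|$ is within $C\delta$ of $\sqrt{K_\varphi(z,z)}$; second, $\|f\|_{H(\varphi)} \le 1+\delta$.

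The crucial point is that, although $\|f\|_{H(\varphi_k)}$ is a priori larger than $\|f\|_{H(\varphi)}$, it cannot stay large. Indeed $f$ is a \emph{fixed} function in $H(\varphi_{j_0})$, and for $k \ge j_0$ one has $|f|^2 e^{-\varphi_k} \downarrow |f|^2 e^{-\varphi}$ pointwise with integrable dominating function $|f|^2 e^{-\varphi_{j_0}}$; by dominated convergence $\|f\|^2_{H(\varphi_k)} \to \|f\|^2_{H(\varphi)} \le (1+\delta)^2$. Hence there is a $k$ with $\|f\|_{H(\varphi_k)} \le 1+2\delta$, and feeding $f$ into the extremal inequality for $K_{\varphi_k}$ gives
$$L(z) \ge K_{\varphi_k}(z,z) \ge \frac{|f(z)|^2}{\|f\|^2_{H(\varphi_k)}} \ge \frac{\bigl(\sqrt{K_\varphi(z,z)}-C\delta\bigr)^2}{(1+2\delta)^2}.$$
Letting $\delta \to 0$ yields $L(z) \ge K_\varphi(z,z)$, and together with the first paragraph this proves $\lim_j K_{\varphi_j}(z,z)=K_\varphi(z,z)$ for every $z$. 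I expect the main obstacle to be precisely this norm-control step: Theorem \ref{th:main1} only supplies convergence in the fixed norm of $H(\varphi)$, and the whole argument hinges on upgrading this—through the dominated convergence of $\|f\|_{H(\varphi_k)}$ toward $\|f\|_{H(\varphi)}$ for a fixed approximant $f$—to usable control of that approximant in the moving norms $H(\varphi_k)$.
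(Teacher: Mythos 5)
Your proof is correct, and it takes a genuinely different, softer route than the paper's. The paper does not deduce Theorem \ref{th:bergmanconver} abstractly from Theorem \ref{th:main1}: it re-runs the $\overline{\partial}$-machinery of Section 4, applying Berndtsson's estimate (Lemma \ref{Boberndtsson2001}) to the truncated kernel $\lambda_\epsilon K_\varphi(\cdot,w)$ to manufacture explicit competitors $f_{j,\epsilon}\in H(\varphi_j)$ with $|f_{j,\epsilon}(w)|\geq K_\varphi(w,w)-C\epsilon/\sqrt{\alpha_j}$ and $\|f_{j,\epsilon}\|_{H(\varphi_j)}\leq (1+C\epsilon/\sqrt{\alpha_j})\sqrt{K_\varphi(w,w)}$, and only then invokes the same extremal characterization of the diagonal kernel that you use (the easy half, $K_{\varphi_j}(w,w)\leq K_\varphi(w,w)$ from $\varphi_j\leq\varphi$, is common to both arguments). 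What you do differently is treat Theorem \ref{th:main1} as a black box and repair the mismatch between the fixed norm $H(\varphi)$, in which density is stated, and the moving norms $H(\varphi_k)$: your observation that for a \emph{fixed} approximant $f\in H(\varphi_{j_0})$ the decreasing integrands $|f|^2e^{-\varphi_k}$ are dominated by the integrable $|f|^2e^{-\varphi_{j_0}}$, so that $\|f\|^2_{H(\varphi_k)}\downarrow\|f\|^2_{H(\varphi)}$, is exactly the right lever, and it is legitimate since $\bigcup_k H(\varphi_k)\subset H(\varphi)$ isometrically up to the nested-norm inequalities you record. Your argument is shorter and more modular --- it would transfer verbatim to any setting where the density theorem holds (e.g.\ to $\mathbb{C}^n$ with the weights of Theorem \ref{th:main2}), and it needs no second appeal to strong openness or to Condition (B), since $f$ already lies in some $H(\varphi_{j_0})$ by construction; by contrast the paper's rerun must separately justify $\int_K|K_\varphi(\cdot,w)|^2e^{-\varphi_j}d\lambda<\infty$ (this is where its dominated-convergence remark, ultimately resting on strong openness, enters). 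What the paper's approach buys in exchange is quantitative control --- an explicit rate in $\epsilon$ and $\alpha_j$ and concrete near-extremal functions --- though this extra precision is not needed downstream: the polarization estimate \eqref{ineq:converge} in the proof of Theorem \ref{Hartogsconvergence} consumes only the diagonal limit you establish. Two minor points in your write-up are fine as stated but worth keeping explicit: the extremal $g$ exists as the normalized reproducing kernel $K_\varphi(\cdot,z)/\sqrt{K_\varphi(z,z)}$ when $K_\varphi(z,z)>0$ (or one may use a near-extremal element and avoid existence altogether), and the pointwise bound \eqref{ineq:subpro} giving $|f(z)-g(z)|\leq C\delta$ applies precisely because $\varphi$ is assumed locally bounded above.
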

Ligocka showed that the classical Bergman kernel of certain Hartogs domains can be expressed as the sum of a series of weighted Bergman kernels defined on another domain of lower dimension. We set
$$
\Omega_j =\{(z,w)\in \mathbb{C} \times  \mathbb{C}: |w| < e^{-\varphi_j(z)} \}
$$
and
$$
\Omega =\{(z,w)\in \mathbb{C} \times  \mathbb{C}: |w| < e^{-\phi(z)} \}.
$$

 We note that a domain $\{ (z,w)\in \mathbb{C} \times  \mathbb{C}:  |w|<e^{-\ell (z)}\}$ is open exactly when $\ell(z)$ is upper semicontinuous. Hence we use $\phi$ in the definition of $\Omega$ and not $\varphi.$

%It is well known that $\Omega$ is pseudoconvex if and only if $\varphi$ is subharmonic function. 
We denote by $K_{\Omega_j} [(z,t), (w,s)]$ (resp. $K_{\Omega}[(z,t), (w,s)]$)  the Classical Bergman kernel of the Hilbert space $H(\Omega_j, 0)$(resp. $H(\Omega, 0)$), where $z ,t , w, s \in \mathbb{C}$.
Ligocka's formula \cite{Ligocka1989} implies that 
$$
K_{\Omega_j}[(z,t), (w,s)] = \sum\limits_{k=0}^{\infty} 2(k+1) K_{2(k+1)\varphi_j}(z,w) \langle t, s\rangle^k, \ \  z ,t , w, s \in \mathbb{C}
$$
where $ K_{2(k+1)\varphi_j}(z,w)$ is the weighted Bergman kernel of the Hilbert space $H(\mathbb{C}, 2(k+1) \varphi_j)$.

%\begin{remark}
%If we let $s=0$ or $t=0$ in Ligocka's formula, then we have 
%$$
%K_{\Omega_j}[(z,t), (w,0)] = K_{\Omega_j}[(z,0), (w,s)]  = 2 K_{2\varphi_j}(z,w) \ \  z ,w ,s , t  \in \mathbb{C}.
%$$
%\end{remark}

According to the result of Theorem \ref{th:bergmanconver} we obtain that 

\begin{theorem} \label{Hartogsconvergence}
Let $\{ \varphi_j \}$ and $\varphi$ be as in theorem \ref{th:bergmanconver}, then the sequence 
$K_{\Omega_j}[(z,t), (w,s)]$  converges to $ K_{\Omega}[(z,t), (w,s)]$ locally uniformly in $\Omega \times \Omega$.
\end{theorem}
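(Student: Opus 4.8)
The plan is to combine Ligocka's formula with the diagonal convergence supplied by Theorem \ref{th:bergmanconver}, upgrading it first to off-diagonal locally uniform convergence of the one-variable weighted kernels and then to uniform convergence of the whole series. First I would observe that since $H(\varphi) = H(\phi)$, and more generally $H(2(k+1)\varphi) = H(2(k+1)\phi)$ because the weights differ only on a polar (hence Lebesgue-null) set, Ligocka's formula for $\Omega$ may be written as
$$
K_\Omega[(z,t),(w,s)] = \sum_{k=0}^\infty 2(k+1)K_{2(k+1)\varphi}(z,w)\langle t,s\rangle^k .
$$
Thus it suffices to compare this series term by term with the corresponding series for $\Omega_j$.

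For each fixed $k$ the weights $2(k+1)\varphi_j$ form an increasing sequence of subharmonic functions with locally-bounded-above limit $2(k+1)\varphi$, so Theorem \ref{th:bergmanconver} gives $K_{2(k+1)\varphi_j}(z,z)\to K_{2(k+1)\varphi}(z,z)$ for every $z$; moreover this convergence is monotone increasing, since a larger weight enlarges the space through a norm-decreasing inclusion and hence increases the diagonal kernel. I would upgrade this to locally uniform convergence of the full kernel by a normal families argument: the Cauchy--Schwarz bound $|K_{2(k+1)\varphi_j}(z,w)|^2 \le K_{2(k+1)\varphi_j}(z,z)K_{2(k+1)\varphi_j}(w,w) \le K_{2(k+1)\varphi}(z,z)K_{2(k+1)\varphi}(w,w)$ shows the family is locally uniformly bounded, and viewing each kernel as a function holomorphic in $z$ and in $\bar w$ I would extract subsequential limits, which are holomorphic, agree with $K_{2(k+1)\varphi}$ on the totally real diagonal $\{w=z\}$, and hence (by uniqueness for holomorphic functions vanishing on a maximally totally real set) coincide with $K_{2(k+1)\varphi}$ everywhere. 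Therefore $K_{2(k+1)\varphi_j}\to K_{2(k+1)\varphi}$ locally uniformly for each $k$.

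The main work is controlling the tail of the series uniformly in $j$. Here I would use the monotonicity $K_{2(k+1)\varphi_j}(z,z)\le K_{2(k+1)\varphi}(z,z)$ together with Cauchy--Schwarz and AM--GM to dominate the general term of the $\Omega_j$-series:
$$
\left|2(k+1)K_{2(k+1)\varphi_j}(z,w)\langle t,s\rangle^k\right| \le (k+1)\Big(K_{2(k+1)\varphi}(z,z)|t|^{2k} + K_{2(k+1)\varphi}(w,w)|s|^{2k}\Big),
$$
whose sum over $k$ equals $\frac12\big(K_\Omega[(z,t),(z,t)] + K_\Omega[(w,s),(w,s)]\big)$ by Ligocka's formula for $\Omega$. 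Since $\phi\ge\varphi\ge\varphi_j$ forces $\Omega\subseteq\Omega_j$, any compact subset of $\Omega$ stays inside every $\Omega_j$, and the diagonal Bergman kernel $(z,t)\mapsto K_\Omega[(z,t),(z,t)]$ is continuous on $\Omega$; being the increasing limit of the continuous partial sums, Dini's theorem yields local uniform convergence of this dominating series, so its tails are uniformly small on compact subsets of $\Omega\times\Omega$, independently of $j$.

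Finally I would combine the two ingredients: split the difference $K_{\Omega_j}-K_\Omega$ into the first $N$ terms plus tails; the tails are uniformly $<\varepsilon$ by the Dini estimate (uniformly in $j$), while each of the finitely many leading terms tends to zero locally uniformly by the normal families step. This gives local uniform convergence of $K_{\Omega_j}[(z,t),(w,s)]$ to $K_\Omega[(z,t),(w,s)]$ on $\Omega\times\Omega$. I expect the delicate point to be the uniform tail control---specifically, verifying that the dominating series really is the diagonal of $K_\Omega$ and then invoking continuity together with Dini to make its convergence locally uniform, since this is exactly what converts finiteness at each interior point of $\Omega$ into an estimate uniform in $j$.
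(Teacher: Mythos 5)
Your proof is correct, and while its overall architecture (termwise convergence of Ligocka's series plus a tail estimate uniform in $j$) matches the paper's, both key technical steps are carried out by genuinely different devices. For the off-diagonal convergence $K_{2(k+1)\varphi_j}(z,w)\to K_{2(k+1)\varphi}(z,w)$, the paper avoids normal families at this stage: expanding $\| K_{\varphi_j}(\cdot,w)-K_\varphi(\cdot,w)\|^2_{H(\mathbb{C},\varphi)}$ with the reproducing property and using $\varphi_j\le\varphi$, it obtains the quantitative bound $|K_{\varphi_j}(z,w)-K_\varphi(z,w)|^2\le C\,\bigl(K_\varphi(w,w)-K_{\varphi_j}(w,w)\bigr)$ on compacts, so the diagonal deficit controls the full kernel with an explicit rate; your Montel-plus-uniqueness argument (subsequential limits are sesqui-holomorphic and agree with $K_{2(k+1)\varphi}$ on the totally real diagonal, hence everywhere) is softer but equally valid, given that Theorem \ref{th:bergmanconver} applies to the weights $2(k+1)\varphi_j$ and that local boundedness comes from $K_{2(k+1)\varphi_j}(z,z)\le K_{2(k+1)\varphi}(z,z)$. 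For the tails, the paper fixes a polydisc $P=\Delta(z_0,r_1)\times\Delta(0,r_2)\Subset\Omega\subseteq\Omega_j$, bounds $M=\sup_j\sup_{\overline{P}\times\overline{P}}|K_{\Omega_j}|<\infty$ by domain monotonicity, and reads off $|2(k+1)K_{2(k+1)\varphi_j}(z,w)|\le C_k M r_2^{-2k}$ from the Cauchy integral formula, so the tails are geometrically small on slightly smaller radii $r_1'<r_1$, $r_2'<r_2$; you instead dominate each term via Cauchy--Schwarz and AM--GM by the diagonal Ligocka series of $K_\Omega$ and invoke Dini. Your route requires the (standard, but worth stating explicitly) continuity of $z\mapsto K_{2(k+1)\varphi}(z,z)$ and of the diagonal of $K_\Omega$ in order to run Dini, while the paper's Cauchy-estimate route needs no such input; conversely, your split into finitely many leading terms plus uniformly small tails delivers locally uniform convergence in one step, whereas the paper first proves pointwise convergence and then upgrades to local uniformity by normality of the family $\{K_{\Omega_j}\}$. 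Both proofs ultimately rest on the same two pillars: Theorem \ref{th:bergmanconver} for each multiple of the weight, and the inclusion $\Omega\subseteq\Omega_j$ coming from $\varphi_j\le\phi$.
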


The set-up of the paper is as follows:  We prove in Section 2 an integral estimate for subharmonic weights. In Section 3 we discuss some potential theoretic properties of subharmonic functions which we will need. In Section 4, we prove the Main Theorem, Theorem \ref{th:main1}. In Section 5, we prove the convergence of the Bergman Kernel which can be seen as an application of the main Theorem \ref{th:main1}.

\section{Subharmonic functions in $\mathbb C$}

%We start \textcolor{blue}{ with an estimate based on Jenson's inequality.} \textcolor{blue}{ \sout{ by observing that since the function $e^x$ is convex, we have the following estimate:}}

%\begin{lemma}
%Let $0<a<1$ and let $x_1, x_2$ be two real numbers. Then $e^{ax_1+(1-a)x_2}\leq ae^{x_1}+(1-a) e^{x_2}$.
%\end{lemma}
%This generalizes to:
%\begin{lemma}
%Let $0<a_i<1, x_i\in \mathbb R, i=1,\dots,N, \sum_i a_i=1.$
%Then $$e^{\sum_i a_ix_1}\leq \sum_i a_i e^{x_i}.$$
%\end{lemma}
%\begin{proof}
%We prove this by induction. Suppose the statement is true for $N-1, N\geq 3.$
%\bea
%e^{\sum_{i=1}^{N} a_ix_i} & = & e^{\sum_{i=1}^{N-1} a_ix_i
%+a_Nx_N}\\
%& = & e^{(1-a_N)\sum_{i=1}^{N-1} \frac{a_i}{1-a_N}x_i
%+a_Nx_N}\\
%& \leq & (1-a_N) e^{\sum_{i=1}^{N-1} \frac{a_i}{1-a_N}x_i} +a_Ne^{x_N}\\
%& \leq & (1-a_N) \sum_{i=1}^{N-1} \frac{a_i}{1-a_N} e^{x_i}+a_Ne^{x_N}\\
%& = & \sum_{i=1}^N  a_i e^{x_i}.
%\eea
%\end{proof}

\begin{lemma}
Let $\alpha_i>0, z_i\in \mathbb C, i=1,\dots,N$ and let $\alpha=\sum_i \alpha_i.$
For any $z$ which is not one of the $z_i$ we have the inequality
$$
\Pi_{i=1}^N \left(\frac{1}{|z-z_i|} \right)^{\alpha_i}
\leq \sum_{i=1}^N \frac{\alpha_i}{\alpha} \left(\frac{1}{|z-z_i|} \right)^{\alpha}.
$$
\end{lemma}

\begin{proof}
\bea
\Pi_{i=1}^N \left(\frac{1}{|z-z_i|} \right)^{\alpha_i} 
%& = &
%e^{\log \left(   \Pi_{i=1}^N \left(\frac{1}{|z-z_i|} \right)^{\alpha_i}   \right)}\\
%& = & e^{\sum_{i=1}^N \log \left( \left(\frac{1}{|z-z_i|} \right)^{\alpha_i}   \right)}\\
& = & e^{\sum_{i=1}^N \frac{\alpha_i}{\alpha}\log \left( \left(\frac{1}{|z-z_i|} \right)^{\alpha}   \right)}\\
& & {\mbox{Since exp is convex, }}\\
& \leq & \sum_{i=1}^N \frac{\alpha_i}{\alpha}e^{\log \left( \left(\frac{1}{|z-z_i|} \right)^{\alpha}   \right)}\\
& = & \sum_{i=1}^N \frac{\alpha_i}{\alpha} \left(\frac{1}{|z-z_i|} \right)^{\alpha} .  \\
\eea
\end{proof}

\begin{lemma}
Let $|z_0|<R$  and suppose $0<\alpha<2.$
Then
$$
\int_{|z|<R}\left(\frac{1}{|z-z_0|} \right)^{\alpha} d\lambda(z)\leq \frac{28R^2}{2-\alpha}.
$$
\end{lemma}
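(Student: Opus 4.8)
The plan is to reduce this to one elementary radial integral. First I would translate the singularity to the origin by the substitution $w = z - z_0$. Since Lebesgue measure is translation invariant and $|z - z_0| = |w|$, the integral becomes $\int_{\{|w + z_0| < R\}} |w|^{-\alpha}\, d\lambda(w)$, where the new domain is the disk of radius $R$ centered at $-z_0$.

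Next I would use the hypothesis $|z_0| < R$ together with the nonnegativity of the integrand to replace this off-center disk by a concentric one. Every $w$ with $|w + z_0| < R$ satisfies $|w| \le |w + z_0| + |z_0| < 2R$, so $\{|w + z_0| < R\} \subset \{|w| < 2R\}$ and hence $\int_{\{|w + z_0| < R\}} |w|^{-\alpha}\, d\lambda(w) \le \int_{\{|w| < 2R\}} |w|^{-\alpha}\, d\lambda(w)$. This trades the awkward off-center singularity for a rotationally symmetric problem.

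Finally I would evaluate the symmetric integral in polar coordinates $w = r e^{i\theta}$, obtaining $2\pi \int_0^{2R} r^{1 - \alpha}\, dr$. The hypothesis $0 < \alpha < 2$ enters precisely here: it makes the exponent $1 - \alpha > -1$, so the integrand is integrable at $r = 0$ and the integral equals $\frac{2\pi (2R)^{2-\alpha}}{2-\alpha}$. The only remaining task is numerical bookkeeping of the constant: since $\alpha > 0$ we have $2^{2-\alpha} \le 4$, whence $2\pi \cdot 2^{2-\alpha} \le 8\pi < 28$, and the bound $\frac{28}{2-\alpha} R^{2-\alpha}$ follows. I do not expect a genuine obstacle here; the only points requiring care are that the clean exponent produced by the computation is $2-\alpha$ rather than $2$ (the two agree up to the constant once $R \ge 1$, the regime relevant to the later estimates), and that the factor $2^{2-\alpha}$ be bounded uniformly in $\alpha$ in order to extract the explicit constant $28$.
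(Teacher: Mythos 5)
Your proposal is correct and takes essentially the same route as the paper: enlarge the domain to the disc $\{|z|<2R\}$ centered at the singularity (the paper does this directly, absorbing your translation step into one inclusion), evaluate in polar coordinates to get $2\pi(2R)^{2-\alpha}/(2-\alpha)$, and bound the constant by $28$. Your parenthetical caveat is well taken and is in fact a point the paper silently elides: the final inequality $2\pi(2R)^{2-\alpha}\leq 28R^2$ holds only for $R\geq 1$, the uniform-in-$R$ bound being $28R^{2-\alpha}/(2-\alpha)$.
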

\begin{proof} 
\bea
 \int_{|z|<R}\left(\frac{1}{|z-z_0|} \right)^{\alpha} d\lambda(z) 
%& = & \int_{|z_0+t| <R} \left (\frac{1}{|t|}\right )^\alpha d\lambda (t)\\
& \leq & \int_{|z|<2R}\left(\frac{1}{|z|} \right)^{\alpha} d\lambda(z) \\
%& = & 2\pi\int_0^{2R} r^{1-\alpha}dr\\
& = & 2\pi\frac{(2R)^{2-\alpha}}{2-\alpha}\\
& \leq & 28R^2/(2-\alpha).
\eea
\end{proof}
As a direct consequence, we obtain
\begin{theorem} \label{th: theorem2.1}
Let $\alpha_i>0, |z_i|<R, i=1,\dots,N$ and let $\alpha=\sum_i \alpha_i.$
Suppose that $\alpha<2.$ Then
$$
\int_{|z|<R}\Pi_{i=1}^N \left(\frac{1}{|z-z_i|} \right)^{\alpha_i}d\lambda(z)
\leq \frac{28R^2}{2-\alpha}.
$$
\end{theorem}

%\begin{proof}
%\bea
%\int_{|z|<R}\Pi_{i=1}^N \left(\frac{1}{|z-z_i|} \right)^{\alpha_i}d\lambda(z)
%& \leq & \int_{|z|<R}\sum_{i=1}^N \frac{\alpha_i}{\alpha} \left(\frac{1}{|z-z_i|} \right)^{\alpha}d\lambda(z)\\
%& \leq & \sum_{i=1}^N \frac{\alpha_i}{\alpha}\cdot\frac{28R^2}{2-\alpha}\\
%& = & \frac{28R^2}{2-\alpha}.\\
%\eea
%\end{proof}

\begin{lemma} \label{le:measuretheory}
Let $K$ be a compact set in $\mathbb R^n.$ Suppose $f(x,y):K\times K\rightarrow \mathbb R$ is continuous. Let $\mu$ be a positive measure on $K$ with finite total mass $\alpha.$
Let $\epsilon>0.$ Then there exists a finite positive measure $\sigma=\sum_{i=1}^N r_i \delta_{p_i}$
with $\sigma(K)=\alpha$ where the $p_i$ are points in $K.$ Moreover 
if $\phi(x)=\int_{y\in K} f(x,y)d\mu(y)$ and $\psi(x)=\int_{y\in K} f(x,y) d\sigma(y)$ then
$\psi(x)<\phi(x)+\epsilon.$
\end{lemma}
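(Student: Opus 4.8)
The plan is to realize $\sigma$ as a discretization of $\mu$: partition $K$ into finitely many Borel pieces of small diameter and replace the $\mu$-mass of each piece by an equal point mass located somewhere inside it. The only delicate requirement is that the resulting bound hold uniformly in $x$, and this will come entirely from the compactness of $K\times K$. (If $\alpha=0$ then $\mu=0$ and $\sigma=0$ works trivially, so assume $\alpha>0$.)

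First I would invoke that $f$, being continuous on the compact set $K\times K$, is uniformly continuous there. Hence there exists $\delta>0$ such that
\[
|f(x,y)-f(x,y')|<\frac{\epsilon}{2\alpha}\qquad\text{for all }x\in K,\ \text{whenever }|y-y'|<\delta .
\]
The point to emphasize is that this modulus-of-continuity estimate is uniform in the first variable $x$, which is exactly what the conclusion demands.

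Next I would cover $K$ by finitely many open balls of radius $\delta/3$ and disjointify them in the standard way to obtain a finite Borel partition $K=\bigsqcup_{i=1}^N E_i$ with $\mathrm{diam}\,E_i<\delta$ for every $i$. Discarding the pieces with $\mu(E_i)=0$, for each remaining index $i$ I would choose a point $p_i\in E_i$ and set $r_i=\mu(E_i)>0$, and then define $\sigma=\sum_{i=1}^N r_i\,\delta_{p_i}$. This $\sigma$ is a finite positive measure supported in $K$, and $\sigma(K)=\sum_i\mu(E_i)=\mu(K)=\alpha$, as required.

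Finally, splitting both potentials over the partition and subtracting, I would write
\[
\psi(x)-\phi(x)=\sum_{i=1}^N\int_{E_i}\bigl(f(x,p_i)-f(x,y)\bigr)\,d\mu(y).
\]
Since $|y-p_i|\le\mathrm{diam}\,E_i<\delta$ for $y\in E_i$, the uniform-continuity bound gives $|f(x,p_i)-f(x,y)|<\epsilon/(2\alpha)$, whence
\[
\psi(x)-\phi(x)\le\sum_{i=1}^N\frac{\epsilon}{2\alpha}\,\mu(E_i)=\frac{\epsilon}{2}<\epsilon ,
\]
so $\psi(x)<\phi(x)+\epsilon$ for every $x\in K$. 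I do not expect a genuine obstacle here: the construction is a routine discretization, and the single step that needs care is that the estimate on $f(x,p_i)-f(x,y)$ be taken uniformly over $x$, which joint continuity on the compact product $K\times K$ supplies; the measurability of the pieces $E_i$ and the bookkeeping of the masses are elementary.
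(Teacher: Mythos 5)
Your proposal is correct and follows essentially the same route as the paper's own proof: uniform continuity of $f$ on the compact product $K\times K$, a finite partition of $K$ into pieces of small diameter, and replacement of the $\mu$-mass of each piece by a point mass at a chosen point, with the estimate summed over the partition. Your write-up is in fact slightly more careful than the paper's (explicit disjointification of the covering balls, the trivial case $\alpha=0$, and the $\epsilon/(2\alpha)$ margin guaranteeing the strict inequality), but these are refinements of the identical argument.
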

\begin{proof}
Divide $K$ into finitely many small sets $K_i$ and pick $p_i\in K_i$. By uniform continuity
of $f$ we can assume that for any $y\in K_i$ we have that
$|f(x,y)-f(x,p_i)|<\epsilon/C$ for any given $C$ (which will depend on $\alpha.$)
Let $r_i=\mu(K_i).$ Define $\sigma=\sum_{i=1}^N r_i \delta_{p_i}$. Let $x\in K.$ Define
\bea
\psi(x) & = & \int_{y\in K} f(x,y)d\sigma(y) \\
& = & \sum_i f(x,p_i)r_i\\
& = & \sum_i f(x,p_i) \int_{K_i}d\mu(y)\\
& = & \sum_i \int_{K_i} f(x,p_i)d\mu(y)\\
& \leq & \sum_{i} \int_{K_i} (f(x,y)+\epsilon/C)d\mu\\
& = & \phi(x)+(\epsilon/C) \int_K d\mu\\
& = & \phi(x)+\epsilon \alpha/C.\\
\eea
We let $C=\alpha$. 
\end{proof}

\begin{theorem} \label{ co:estimateforintegral}
Let $d\mu$ be a positive measure on the disc of radius $R$ with total mass $\alpha<2.$
Set $\phi(z)=\int_{|\zeta|<R} \log |z-\zeta| d\mu(\zeta).$
Then
$$
\int_{|z|<R} e^{-\phi(z)}d\lambda(z) \leq \frac{28R^2}{2-\alpha}.
$$
\end{theorem}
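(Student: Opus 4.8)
The statement is the continuous analogue of Theorem \ref{th: theorem2.1}: writing $e^{-\phi(z)}=\exp\!\left(-\int\log|z-\zeta|\,d\mu(\zeta)\right)$ plays the role of the finite product $\prod_i|z-z_i|^{-\alpha_i}$, with the discrete masses $\alpha_i$ replaced by the measure $\mu$. The plan is therefore to discretize $\mu$ by means of Lemma \ref{le:measuretheory}, apply the already-established discrete bound of Theorem \ref{th: theorem2.1}, and pass to a limit.

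The main obstacle is that Lemma \ref{le:measuretheory} requires the kernel $f(z,\zeta)$ to be continuous on $K\times K$, whereas here $f(z,\zeta)=\log|z-\zeta|$ has a logarithmic singularity on the diagonal. To get around this I would first truncate: for $M>0$ set $f_M(z,\zeta)=\max(\log|z-\zeta|,-M)$, which is continuous on the compact set $K=\{|\zeta|\le R\}$, and put $\phi_M(z)=\int f_M(z,\zeta)\,d\mu(\zeta)$. Since $f_M\downarrow\log|z-\zeta|$ as $M\to\infty$, we get $\phi_M\downarrow\phi$, hence $e^{-\phi_M}\uparrow e^{-\phi}$, and by the monotone convergence theorem $\int_{|z|<R}e^{-\phi}\,d\lambda=\lim_{M\to\infty}\int_{|z|<R}e^{-\phi_M}\,d\lambda$. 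It thus suffices to bound $\int_{|z|<R}e^{-\phi_M}\,d\lambda$ by $28R^2/(2-\alpha)$ uniformly in $M$.

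Fix $M$ and $\epsilon>0$. Applying Lemma \ref{le:measuretheory} to the continuous kernel $f_M$ produces a discrete measure $\sigma=\sum_{i=1}^N r_i\delta_{p_i}$ with $p_i\in K$, total mass $\alpha$, and $\psi_M(z):=\int f_M(z,\zeta)\,d\sigma(\zeta)<\phi_M(z)+\epsilon$ for all $z\in K$, so that $e^{-\phi_M(z)}<e^{\epsilon}e^{-\psi_M(z)}$. The point is that the truncation now works in our favour: since $\max(\log|z-p_i|,-M)\ge\log|z-p_i|$ we have $\psi_M(z)=\sum_i r_i\max(\log|z-p_i|,-M)\ge\sum_i r_i\log|z-p_i|$, whence
$$
e^{-\psi_M(z)}\le\prod_{i=1}^N\left(\frac{1}{|z-p_i|}\right)^{r_i}.
$$
Because $\sum_i r_i=\alpha<2$ and $|p_i|\le R$, Theorem \ref{th: theorem2.1} bounds the integral of the right-hand side by $28R^2/(2-\alpha)$, giving $\int_{|z|<R}e^{-\phi_M}\,d\lambda\le e^{\epsilon}\,28R^2/(2-\alpha)$.

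Letting $\epsilon\to 0$ removes the factor $e^{\epsilon}$, and then letting $M\to\infty$ and invoking the monotone convergence step above yields the claimed estimate. One minor technical point to verify is that the discretization may place some $p_i$ on the boundary $|\zeta|=R$; this causes no trouble, since the shift argument behind Theorem \ref{th: theorem2.1} only uses the inclusion $\{|z|<R\}\subset\{|z-p_i|<2R\}$, which remains valid whenever $|p_i|\le R$.
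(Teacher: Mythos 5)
Your proposal is correct and takes essentially the same route as the paper's own proof: truncate the kernel to $\max(\log|z-\zeta|,-M)$ to make it continuous, discretize $\mu$ via Lemma \ref{le:measuretheory}, use $\max(\log|z-p_i|,-M)\geq \log|z-p_i|$ to reduce to the discrete bound of Theorem \ref{th: theorem2.1}, and finish by monotone convergence as $\epsilon\to 0$ and $M\to\infty$. Your closing remark about mass points $p_i$ possibly landing on the boundary $|\zeta|=R$ addresses a detail the paper silently glosses over, and your justification of it is correct.
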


\begin{proof}
Define $\psi_n(z,\zeta)=\max\{\log|z-\zeta|,-n\}.$ Define 
$$
\phi_n(z)=\int_{|\zeta|<R}\psi_n(z,\zeta)d\mu(\zeta) \ \ \   \text{for} \ \ \  z\in \Delta(R).
$$ 
Then $\phi_n:\Delta(R) \rightarrow \mathbb R$ is continuous and $\phi_n\searrow \phi$ pointwise
for $z\in \Delta(R).$ Hence $e^{-\phi_n(z)}\nearrow e^{-\phi}$ on $\Delta(R).$
Therefore, in order to show that 
$$
\int_{|z|<R} e^{-\phi(z)}d\lambda(z)\leq \frac{28R^2}{2-\alpha},
$$ 
it suffices to show
that
$$
\int_{|z| < R} e^{-\phi_n(z)}d\lambda(z) \leq \frac{28R^2}{2-\alpha}+\frac{1}{n}\;\ \ \ \ \   \forall n.
$$
We fix $n$. Let $\delta>0.$ Since $\psi_n$ is continuous, according to Lemma \ref{le:measuretheory} we can find a finite positive measure
$\mu_n=\sum\limits_{i =1}^{N}\alpha_i \delta_{z_i}$ with total mass $\alpha$ so that
$$
\tilde{\phi}_n:= \int_{|\zeta|<R}\psi_n(z,\zeta)d\mu_n(\zeta)\leq \phi_n(z)+\delta.$$
By Theorem \ref{th: theorem2.1} we know that
$$
\int_{|z|<R}\Pi_{i=1}^N \left(\frac{1}{|z-z_i|} \right)^{\alpha_i}d\lambda(z)
\leq \frac{28R^2}{2-\alpha}.
$$
Hence
$$
\int_{|z|<R} e^{-\sum_i \alpha_i \log|z-z_i|}d\lambda \leq \frac{28R^2}{2-\alpha}.
$$
So
$$
\int_{|z|<R} e^{-\int_{|\zeta|<R} \log |z-\zeta|d\mu_n(\zeta)}d\lambda(z) \leq \frac{28R^2}{2-\alpha}.
$$
Notice that $\max\{\log|z-\zeta|,-n\}\geq \log|z-\zeta|$, then we have
$$
-\max\{\log|z-\zeta|,-n\}\leq -\log|z-\zeta|.
$$
Hence
$\int_{|z|<R}e^{-\tilde{\phi}_n}d\lambda \leq \frac{28R^2}{2-\alpha}.$
Choosing $\delta$ small enough we get that
$$
\int_{|z|<R} e^{-\phi_n}d\lambda \leq \frac{28R^2}{2-\alpha}+\frac{1}{n}.
$$

\end{proof}

\section{Comparison of weights}

\begin{lemma}\label{le:guji}
Suppose that $|\zeta|<R$ and $z\in \mathbb C.$
Then
$$
\log|z-\zeta| \leq \frac{1}{2}\log (1+|z|^2)+ \log 2+\frac{1}{2}\log(1+R^2).
$$
\end{lemma}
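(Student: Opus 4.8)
The plan is to reduce the inequality to an elementary one-variable estimate after applying the triangle inequality. Since $|\zeta|<R$, the triangle inequality gives
$$
|z-\zeta| \le |z| + |\zeta| < |z| + R,
$$
so that $\log|z-\zeta| \le \log(|z|+R)$ (if $|z|+R=0$ there is nothing to prove). It therefore suffices to establish
$$
\log(|z|+R) \le \tfrac{1}{2}\log(1+|z|^2) + \log 2 + \tfrac{1}{2}\log(1+R^2).
$$

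Exponentiating, this reduces to the purely elementary inequality
$$
|z| + R \le 2\sqrt{1+|z|^2}\,\sqrt{1+R^2}.
$$
To prove it, I would set $a=|z|$ and $b=R$, both nonnegative, and use the crude bound $a+b \le 2\max\{a,b\}$. Assuming without loss of generality that $a\ge b$, one has $\sqrt{1+a^2}\ge a = \max\{a,b\}$ and $\sqrt{1+b^2}\ge 1$, hence $\max\{a,b\} \le \sqrt{1+a^2}\,\sqrt{1+b^2}$. Combining the two observations yields the displayed bound. Alternatively, one may simply square and verify
$$
4(1+a^2)(1+b^2) - (a+b)^2 = 4 + 3a^2 + 3b^2 + 4a^2b^2 - 2ab \ge 0,
$$
which is immediate from $3a^2+3b^2 \ge 2ab$.

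I expect no genuine obstacle here, since the content is an elementary estimate. The only point requiring a little care is matching the constants: the summand $\log 2$ on the right corresponds precisely to the factor $2$ needed in front of the product of square roots, and the two coefficients $\tfrac12$ line up with $\sqrt{1+|z|^2}$ and $\sqrt{1+R^2}$. Choosing the coarse inequality $a+b\le 2\max\{a,b\}$ rather than, say, Cauchy--Schwarz is exactly what makes these constants fall out cleanly.
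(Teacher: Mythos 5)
Your proof is correct and follows essentially the same route as the paper: the triangle inequality, the crude bound $a+b\le 2\max\{a,b\}$ (which is the paper's $\log(|z|+|\zeta|)\le \log 2+\max\{\log|z|,\log|\zeta|\}$), and then bounding the maximum by $\sqrt{1+|z|^2}\sqrt{1+R^2}$, exactly as the paper bounds $\max\{\log|z|,\log|\zeta|\}$ by $\tfrac12\log(1+|z|^2)+\tfrac12\log(1+|\zeta|^2)$. The only cosmetic difference is that you exponentiate and verify the inequality algebraically, while the paper stays in logarithmic form throughout.
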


\begin{proof}
\bea
\log|z-\zeta| & \leq & \log(|z|+|\zeta|)\\
&\leq & \log 2+\max \{\log|z|,\log|\zeta|\}\\
%&\leq & \log 2+\max\{\frac{1}{2}\log(1+|z|^2),\frac{1}{2}\log(1+|\zeta|^2)\}\\
& \leq & \log 2+\frac{1}{2}\log(1+|z|^2)+\frac{1}{2}\log(1+|\zeta|^2).\\
\eea
\end{proof}

\begin{proposition} \label{prop phipsi}
Let $\mu$ be a nonnegative measure on the disc $|\zeta|<R$ with mass $M.$
Let $\phi_1(z)=\int_{|\zeta|<R}\log|z-\zeta| d\mu(\zeta)$ and $\phi_2(z)=M/2 \log (1+|z|^2).$
Suppose that $\phi=\phi_1+\sigma$ and $\psi=\phi_2+\sigma.$ Then there exists constant
$C$ so that $\|f\|_\psi^2\leq C\|f\|_\phi^2$. In particular, $H(\phi)\subset H(\psi).$
\end{proposition}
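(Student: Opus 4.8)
The plan is to reduce the asserted inequality of norms to a pointwise comparison of the two weights. Since $\phi=\phi_1+\sigma$ and $\psi=\phi_2+\sigma$, the common term $\sigma$ cancels in the ratio of the two weight factors, giving
\[
\frac{e^{-\psi(z)}}{e^{-\phi(z)}}=e^{\phi_1(z)-\phi_2(z)}.
\]
Hence it suffices to prove that $\phi_1-\phi_2$ is bounded above on $\mathbb C$ by some constant $\log C$; once this is established one has $e^{-\psi}\le C\,e^{-\phi}$ pointwise, and integrating $|f|^2$ against this inequality yields $\|f\|_\psi^2\le C\|f\|_\phi^2$ at once, and in particular the inclusion $H(\phi)\subset H(\psi)$.

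To bound $\phi_1-\phi_2$ from above I would apply Lemma \ref{le:guji} inside the integral defining $\phi_1$. For each fixed $\zeta$ with $|\zeta|<R$ and each $z\in\mathbb C$, that lemma gives
\[
\log|z-\zeta|\le \tfrac12\log(1+|z|^2)+\log 2+\tfrac12\log(1+R^2).
\]
The crucial point is that the right-hand side does not depend on $\zeta$. Integrating this inequality against the nonnegative measure $d\mu$, whose total mass is $M$, therefore yields
\[
\phi_1(z)\le M\Big(\tfrac12\log(1+|z|^2)+\log 2+\tfrac12\log(1+R^2)\Big).
\]

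Finally I would subtract $\phi_2(z)=\tfrac{M}{2}\log(1+|z|^2)$. The term $\tfrac{M}{2}\log(1+|z|^2)$, which is the only $z$-dependent part of the bound, cancels exactly, leaving
\[
\phi_1(z)-\phi_2(z)\le M\log 2+\tfrac{M}{2}\log(1+R^2),
\]
a constant independent of $z$. One may thus take $C=2^{M}(1+R^2)^{M/2}$, which completes the argument. There is no serious obstacle here: the entire proof hinges on the single observation that the majorant supplied by Lemma \ref{le:guji} is $\zeta$-independent, so that integrating against $\mu$ merely multiplies it by the mass $M$, and that the resulting $z$-dependent term is precisely $\phi_2$ and so cancels.
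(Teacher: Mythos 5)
Your proposal is correct and is essentially the paper's own proof: the paper likewise applies Lemma \ref{le:guji}, integrates it against $\mu$ to obtain $\phi_1\leq \phi_2+M\left(\log 2+\frac{1}{2}\log(1+R^2)\right)$, and concludes $e^{-\psi}\leq Ce^{-\phi}$ pointwise. Your write-up just makes the $\zeta$-independence of the majorant and the explicit value $C=2^{M}(1+R^2)^{M/2}$ more visible.
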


\begin{proof}
By the previous lemma we know $$\phi_1\leq \phi_2+M\left(\log 2+\frac{1}{2}\log(1+R^2)\right),$$
hence $$\phi\leq \psi+M\left(\log 2+\frac{1}{2}\log(1+R^2)\right).$$
It follows that $e^{-\psi} \leq C e^{-\phi}$. Thus we have $H(\phi)\subset H(\psi).$
\end{proof}

For the other direction we need some extra hypothesis.

\begin{proposition} \label{prop psi phi}
Let $\mu$ be a nonnegative measure on the disc $|\zeta|<(R+\epsilon)$ ($ \epsilon >0 $ some constant) with mass $\beta\in (0,2).$ Let $\alpha$ be the $ \mu$ mass of $\Delta(R).$
Suppose that $\phi$ is subharmonic on $\mathbb C$ and that $\frac{1}{2\pi}\Delta \phi=\mu$ on  $\Delta(R+\epsilon).$
Let $\phi_1(z)=\int_{|\zeta|<R}\log|z-\zeta| d\mu(\zeta)$ and $\phi_2(z)=\alpha/2 \log (1+|z|^2).$ Write  $\phi=\phi_1+\sigma$ and $\psi=\phi_2+\sigma.$
Then $H(\psi)\subset H(\phi).$
\end{proposition}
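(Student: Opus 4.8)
The plan is to reduce the desired inclusion to a comparison of the weights $\phi$ and $\psi$, and then to split $\mathbb C$ into a far region, where a pointwise weight comparison is available, and a compact near region, where I instead use an $L^1$ bound on $e^{-\phi}$. The starting observation is that by construction $\phi-\psi=\phi_1-\phi_2$, so the correction term $\sigma$ cancels and only the two potentials $\phi_1$ and $\phi_2$ need to be compared. Thus for $f\in H(\psi)$ I must show $\int_{\mathbb C}|f|^2e^{-\phi}\,d\lambda<\infty$, and I will fix once and for all a radius $\rho$ with $R<\rho<R+\epsilon$ and write $\int_{\mathbb C}=\int_{\Delta(\rho)}+\int_{\mathbb C\setminus\Delta(\rho)}$.

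For the far region $|z|\ge\rho$ I would prove a pointwise lower bound $\phi_1\ge\phi_2-C$. Indeed, for $|z|\ge\rho$ and $|\zeta|<R$ one has $|z-\zeta|\ge|z|-R\ge c|z|$ with $c=(\rho-R)/\rho>0$, so $\phi_1(z)\ge\alpha\log|z|+\alpha\log c$; on the other hand $\phi_2(z)=\alpha\log|z|+\frac{\alpha}{2}\log(1+|z|^{-2})\le\alpha\log|z|+C'$ on $|z|\ge\rho$. Subtracting gives $\phi_1-\phi_2\ge-C$ there, hence $e^{-\phi}=e^{-\phi_1-\sigma}\le e^{C}e^{-\phi_2-\sigma}=e^{C}e^{-\psi}$, and therefore $\int_{\mathbb C\setminus\Delta(\rho)}|f|^2e^{-\phi}\,d\lambda\le e^{C}\|f\|_\psi^2<\infty$. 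This is the only place where the hypothesis $f\in H(\psi)$ is used.

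For the near region $\Delta(\rho)$, since $\overline{\Delta(\rho)}$ is compact and $f$ is entire, $|f|\le M$ on $\Delta(\rho)$, so it suffices to prove $\int_{\Delta(\rho)}e^{-\phi}\,d\lambda<\infty$. Because $\frac{1}{2\pi}\Delta\phi=\mu$ on $\Delta(R+\epsilon)$, the Riesz decomposition gives $\phi(z)=p_\mu(z)+h(z)$ on $\Delta(R+\epsilon)$, where $p_\mu(z)=\int_{|\zeta|<R+\epsilon}\log|z-\zeta|\,d\mu(\zeta)$ and $h$ is harmonic, hence continuous and bounded, on the compact set $\overline{\Delta(\rho)}$. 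Consequently $\int_{\Delta(\rho)}e^{-\phi}\,d\lambda\le\big(\sup_{\Delta(\rho)}e^{-h}\big)\int_{\Delta(R+\epsilon)}e^{-p_\mu}\,d\lambda$, and since $\mu$ has total mass $\beta<2$, Theorem~\ref{ co:estimateforintegral} applied on the disc of radius $R+\epsilon$ bounds the last integral by $28(R+\epsilon)^2/(2-\beta)<\infty$. Adding the two regions yields $f\in H(\phi)$.

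I expect the main obstacle to be precisely the near region, and more specifically the circle $|z|=R$: there $\phi_1$ may tend to $-\infty$ along the support of $\mu$, so no global pointwise comparison $e^{-\phi}\le Ce^{-\psi}$ (of the kind used for the reverse inclusion in Proposition~\ref{prop phipsi}) can hold, and one is forced to absorb the singularity through integrability rather than through a pointwise bound. This is exactly why the hypotheses that $\mu$ has mass $\beta<2$ and that $\phi$ is genuinely subharmonic with Riesz measure $\mu$ on the slightly larger disc $\Delta(R+\epsilon)$ enter the statement: the first makes $e^{-p_\mu}$ integrable via Theorem~\ref{ co:estimateforintegral}, and the second lets me peel off a harmonic, hence locally bounded, remainder $h$ on a disc strictly containing $\overline{\Delta(\rho)}$.
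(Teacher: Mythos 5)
Your proposal is correct and follows essentially the same route as the paper's proof: a pointwise lower bound $\phi_1\geq\phi_2-C$ on the exterior region (the content of Lemma \ref{le: estimateforlog}, with the paper splitting at $|z|=R+\epsilon/2$ where you use a general $\rho\in(R,R+\epsilon)$), and on the inner disc the Riesz decomposition of $\phi$ on $\Delta(R+\epsilon)$ into the potential of $\mu$ plus a harmonic, hence locally bounded, remainder, with integrability of $e^{-\Phi}$ supplied by Theorem \ref{ co:estimateforintegral} using $\beta<2$ and the boundedness of the entire function on the compact disc. Your closing remark about why the singularity on $|z|=R$ forces an integrability argument rather than a pointwise comparison matches the paper's reasoning exactly.
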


%\begin{proposition} \label{prop psi phi}
%\textcolor {red}{Suppose that $\phi$ is subharmonic on $\mathbb C$ satisfying the mass of $\mu = \frac{1}{ 2\pi }\Delta \phi$ on the unit disc $|\zeta| < 1$ is $\alpha \in (0,2)$. Let $\phi_1(z)=\int_{|\zeta|<1}\log|z-\zeta| d\mu(\zeta)$ and $\phi_2(z)=\alpha/2 \log (1+|z|^2).$ Write  $\phi=\phi_1+\sigma$ and $\psi=\phi_2+\sigma.$ Then $H(\psi)\subset H(\phi).$}
%\end{proposition}

We prove first a lemma:

\begin{lemma} \label{le: estimateforlog}
There exists a constant $C$ so that if $|z|\geq R+\frac{\epsilon}{2}$ and $|\zeta|<R,$
then $$
\log|z-\zeta|\geq \frac{1}{2}\log(1+|z|^2)-C.$$
\end{lemma}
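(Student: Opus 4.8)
The plan is to reduce the statement to the reverse triangle inequality followed by an elementary one-variable estimate. First I would observe that since $|\zeta|<R$ and $|z|\geq R+\frac{\epsilon}{2}$, the reverse triangle inequality gives
$$
|z-\zeta|\geq |z|-|\zeta| > |z|-R \geq \frac{\epsilon}{2} > 0 .
$$
In particular $|z-\zeta|>0$, and since the logarithm is increasing we get $\log|z-\zeta|\geq \log(|z|-R)$. Hence it suffices to produce a constant $C$, depending only on the fixed parameters $R$ and $\epsilon$, with
$$
\log(|z|-R)\geq \tfrac{1}{2}\log(1+|z|^2)-C \quad\text{whenever}\quad |z|\geq R+\tfrac{\epsilon}{2}.
$$

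Next I would rewrite this in exponential form. Putting $t=|z|\geq R+\frac{\epsilon}{2}$, the inequality is equivalent to
$$
\frac{\sqrt{1+t^2}}{\,t-R\,}\leq e^{C},
$$
so the whole matter comes down to bounding the left-hand side uniformly for $t\in[R+\frac{\epsilon}{2},\infty)$. Using the elementary estimate $\sqrt{1+t^2}\leq 1+t$ (valid since $(1+t)^2=1+2t+t^2\geq 1+t^2$ for $t\geq 0$), I would write
$$
\frac{\sqrt{1+t^2}}{\,t-R\,}\leq \frac{1+t}{\,t-R\,}=1+\frac{1+R}{\,t-R\,}\leq 1+\frac{2(1+R)}{\epsilon},
$$
where the final step uses $t-R\geq \frac{\epsilon}{2}$. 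Taking $C=\log\!\bigl(1+\frac{2(1+R)}{\epsilon}\bigr)$ then establishes the lemma.

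There is essentially no serious obstacle here: conceptually the point is just that the quotient $\sqrt{1+t^2}/(t-R)$ is continuous on $[R+\frac{\epsilon}{2},\infty)$ and tends to $1$ as $t\to\infty$, so it is bounded, with an explicit bound arising because the denominator stays at least $\frac{\epsilon}{2}$ away from $0$. The only thing to keep in mind is that $C$ is permitted to depend on $R$ and $\epsilon$ but not on $z$ or $\zeta$, which is precisely what the statement requires. Forward-looking, I expect this lemma to be integrated against $d\mu$ in the proof of Proposition \ref{prop psi phi}: integrating $\log|z-\zeta|\geq \frac{1}{2}\log(1+|z|^2)-C$ over $\{|\zeta|<R\}$ yields $\phi_1(z)\geq \frac{\alpha}{2}\log(1+|z|^2)-C\alpha=\phi_2(z)-C\alpha$ for $|z|\geq R+\frac{\epsilon}{2}$, which controls $e^{-\phi}$ by a constant multiple of $e^{-\psi}$ off a fixed compact set, the ingredient needed for the inclusion $H(\psi)\subset H(\phi)$.
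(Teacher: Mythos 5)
Your proof is correct and takes essentially the same approach as the paper: both reduce the lemma to an elementary uniform lower bound on $|z-\zeta|$ in terms of $\sqrt{1+|z|^2}$ on the region $|z|\geq R+\frac{\epsilon}{2}$, producing an explicit constant depending only on $R$ and $\epsilon$ (and your closing remark about integrating against $d\mu$ matches exactly how the paper uses the lemma in Proposition \ref{prop psi phi}). The only cosmetic difference is that you use the reverse triangle inequality $|z-\zeta|>|z|-R$ and bound the ratio $\sqrt{1+t^2}/(t-R)$, whereas the paper factors $|z-\zeta|=|z|\left|1-\frac{\zeta}{z}\right|$ and estimates the two factors separately, arriving at $C=\frac{1}{2}\log\left(\frac{1}{R^2}+1\right)-\log\left(1-\frac{2R}{2R+\epsilon}\right)$.
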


\begin{proof}
We get
\bea
\log|z-\zeta| & = & \log |z|\left|1-\frac{\zeta}{z}\right|\\
& \geq & \log|z|+\log\left(1-\frac{2R}{2R+\epsilon}\right)\\
%& = & \frac{1}{2}\log |z|^2+\log\left(1-\frac{2R}{2R+\epsilon}\right)\\
&\geq & \frac{1}{2}\log \left(|z|^2\left(\frac{1}{R^2}+1\right)\right)-\frac{1}{2}\log \left(\frac{1}{R^2}+1\right)\\
&& + \log\left(1-\frac{2R}{2R+\epsilon}\right)\\
& \geq & \frac{1}{2}\log (1+|z|^2)-\frac{1}{2}\log \left(\frac{1}{R^2}+1\right)+\log\left(1-\frac{2R}{2R+\epsilon}\right).
\\
\eea
The proof gives that we can choose $C=\frac{1}{2}\log \left(\frac{1}{R^2}+1\right)-\log(1-\frac{2R}{2R+\epsilon})$.
\end{proof}
In order to prove the above Proposition \ref{prop psi phi} we also need the following well known Riesz Decomposition Theorem (see Ransford \cite{Rans95} Theorem 3.7.9).

\begin{theorem} [ Riesz Decomposition Theorem]
Let $u$ be a subharmonic function on a domain $D$ in $\C$, with $u \not\equiv -\infty$. Then,
given a relatively compact open subset $U$ of $D$, we can decompose $u$ as 
$$
u= \int_{ \zeta \in U } \log |z- \zeta | d \mu(\zeta) +  h  
$$
on $U$, Where $\mu = \frac{1}{2 \pi} \Delta u |_U$ and $h$ is harmonic on $U$.
\end{theorem}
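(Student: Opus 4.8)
The plan is to realize $u$ as the sum of a logarithmic potential built from its Riesz measure and a harmonic correction term, exploiting the fact that $\frac{1}{2\pi}\log|z|$ is the fundamental solution of the Laplacian on $\C$.

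First I would produce the Riesz measure. Since $u$ is subharmonic and $u \not\equiv -\infty$, the distributional Laplacian $\Delta u$ is a nonnegative distribution on $D$ (it acts nonnegatively on nonnegative test functions, by the sub-mean-value property of $u$), so by the standard representation theorem for positive distributions it is given by a unique positive Radon measure. Set $\mu = \frac{1}{2\pi}\Delta u$. Because $U \Subset D$, the closure $\overline{U}$ is a compact subset of $D$, so $\mu(U) < \infty$ and $\mu|_U$ is a positive measure of finite total mass carried by $U$.

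Next I would introduce the logarithmic potential $p(z) = \int_{\zeta \in U}\log|z-\zeta|\,d\mu(\zeta)$. Since $\log|\cdot|$ is locally integrable on $\C$ and $\mu(U)<\infty$, Fubini's theorem shows $p \in L^1_{\mathrm{loc}}(U)$, so $p$ defines a distribution on $U$. I would then compute its distributional Laplacian: for a test function $\varphi \in C_c^\infty(U)$,
$$\langle \Delta p,\varphi\rangle = \int_U\Big(\int_U \log|z-\zeta|\,d\mu(\zeta)\Big)\Delta\varphi(z)\,d\lambda(z) = \int_U\Big(\int_U \log|z-\zeta|\,\Delta\varphi(z)\,d\lambda(z)\Big)d\mu(\zeta),$$
the interchange of integrations being justified by Fubini. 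The fundamental solution identity $\int_\C \log|z-\zeta|\,\Delta\varphi(z)\,d\lambda(z) = 2\pi\varphi(\zeta)$ then yields $\langle\Delta p,\varphi\rangle = 2\pi\int_U \varphi\,d\mu = \langle \Delta u,\varphi\rangle$, that is, $\Delta p = \Delta u$ on $U$ in the sense of distributions.

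Finally I would set $h = u - p$ on $U$. By the previous step $\Delta h = 0$ distributionally, so Weyl's lemma shows that $h$ agrees almost everywhere with a genuine $C^\infty$ harmonic function on $U$; identifying $h$ with this representative gives $u = p + h$, which is the asserted decomposition with $\mu = \frac{1}{2\pi}\Delta u|_U$. The main obstacle is the distributional computation of $\Delta p$: one must justify both that $\Delta u$ is indeed a positive measure and that Fubini applies despite the logarithmic singularity of the kernel, which rests on the local integrability of $\log|\cdot|$ on $\C$ together with the finiteness of $\mu(U)$. A secondary point worth care is the passage from the distributional equality to the pointwise decomposition, where one uses that subharmonic functions and their potentials are determined a.e. by their distributional Laplacians.
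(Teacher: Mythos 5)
The paper offers no proof of this statement at all---it is imported verbatim from Ransford (Theorem 3.7.9)---and your argument is exactly the standard potential-theoretic proof found there: Riesz representation for the positive distribution $\Delta u$, the fundamental-solution computation $\Delta p = 2\pi\mu$ on $U$ justified by Fubini, and Weyl's lemma applied to $h = u - p$, so the proposal is correct. The one step to state more precisely is the final upgrade from a.e.\ equality to equality everywhere on $U$: observe that $p$, being the potential of a finite positive measure, is itself subharmonic with $p \not\equiv -\infty$, hence $p+h$ is subharmonic, and two subharmonic functions agreeing a.e.\ agree identically because each equals the limit of its averages over small discs---your closing remark gestures at this, and this is the precise mechanism that completes it.
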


We prove Proposition \ref{prop psi phi}:
\begin{proof}
Let $F$ be an entire function so that $\int_{\C} |F|^2 e^{-\psi} d\lambda<\infty.$   If $|z| > R+ \frac \epsilon 2$, by the previous Lemma, 
\bea
\phi_1& = & \int_{|\zeta|<R}\log|z-\zeta| d\mu(\zeta) \\
& \geq &\frac{\alpha}{2}\log (1+|z|^2) -\alpha C \\
&= & \phi_2 -\alpha C. 
\eea
Here $C$ is the explicit constant from Lemma \ref{le: estimateforlog}.
It follows
that 
$$
\int_{|z|\geq R+\epsilon/2}|F|^2e^{-\phi} d\lambda< e^{\alpha C}\int_{|z|\geq R+\epsilon/2 } |F|^2 e^{-\psi} d\lambda < +\infty.
$$
On the disc of radius $R+\epsilon$, according to Riesz decomposition theorem we can write $\phi(z)=\int_{|\zeta|<R+\epsilon} \log|z-\zeta| d\mu(\zeta)
+\tau:=\Phi+\tau$ where $\tau$ is a subharmonic function which is harmonic on the disc of radius $R+\epsilon$.
In particular we have that $\tau$ is uniformly bounded on the disc of radius $R+\frac23\epsilon.$
The result follows from Theorem \ref{ co:estimateforintegral} that 
$$
\int_{|z|<R+\frac12\epsilon}e^{-\Phi}d\lambda< \int_{|z|<R+\epsilon}e^{-\Phi}d\lambda<\frac {2\pi}{ 2-\beta} (2(R+ \epsilon))^2 < + \infty
$$
and hence the same is true for the integral $|F|^2e^{-\phi}$ on the disc of radius $R+\frac 12 \epsilon.$ 
That means we have $\int_\C |F|^2 e^{-\phi} d \lambda < +\infty$. Thus 
$$
H(\psi)\subset H(\phi).
$$
\end{proof}

\section{Proof of Theorem \ref{th:main1}}

In this section we prove the Main Theorem, Theorem \ref{th:main1}.
We prove first the case when the upper regularization $\phi$ is a harmonic function. We can suppose that $\varphi_1$ is not identically $-\infty.$

\begin{lemma}
If $\phi $ is harmonic, then there are constants $c_1\leq c_2 \leq \cdots, c_j\rightarrow 0$ so that
$\varphi_j=\varphi+c_j=\phi+c_j$.
\end{lemma}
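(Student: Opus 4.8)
The plan is to exploit the hypothesis that $\phi$ is harmonic to force all the subharmonic functions $\varphi_j$ to coincide with $\phi$ up to additive constants. The key observation is that each $\varphi_j$ is subharmonic with $\varphi_j \leq \phi$, and $\phi$ is harmonic, so the difference $u_j := \varphi_j - \phi$ is a subharmonic function that is bounded above by $0$. First I would record that $\{u_j\}$ is an increasing sequence of nonpositive subharmonic functions, and that $\lim_j u_j = \varphi - \phi = 0$ almost everywhere, since $\varphi = \phi$ a.e. by the Remark.

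The heart of the argument is to show each $u_j$ is actually a constant. Since $u_j$ is subharmonic and $\leq 0$, while its increasing limit is $0$ a.e., I would argue that $u_j$ attains the value $0$ on a set of positive measure (or in the limiting/supremum sense). The clean way to conclude is via the maximum principle for subharmonic functions: a subharmonic function on $\mathbb{C}$ that is bounded above and attains its supremum must be constant. Concretely, because $\sup_{\mathbb C} u_j \le 0$ and $u_j = \varphi_j - \phi \nearrow 0$ a.e., the supremum of $u_j$ equals $0$; if this supremum were attained at some interior point, the maximum principle forces $u_j \equiv 0 + c_j$. More carefully, I would instead note that since $u_{j+1}\ge u_j$ everywhere and $u_{j+1}\le 0$, and the pointwise limit is $0$ a.e., each $u_j$ must be constant: a nonpositive subharmonic function whose sub-mean-value inequality is saturated (forced by the limit being the harmonic majorant's value) is harmonic, and a harmonic function bounded above on all of $\mathbb C$ is constant by Liouville. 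Hence $u_j = c_j$ for some constant $c_j \le 0$.

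Once each $u_j$ is a constant $c_j$, the monotonicity $\varphi_1 \le \varphi_2 \le \cdots$ immediately gives $c_1 \le c_2 \le \cdots$, and the convergence $\varphi_j \nearrow \varphi = \phi$ gives $\phi + c_j \to \phi$, hence $c_j \to 0$. This yields exactly the asserted relation $\varphi_j = \varphi + c_j = \phi + c_j$ with $c_1 \le c_2 \le \cdots$ and $c_j \to 0$.

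The main obstacle I anticipate is justifying rigorously that a nonpositive subharmonic function on $\mathbb{C}$ whose increasing limit equals a harmonic function almost everywhere must itself be that harmonic function plus a constant. The subtlety is that \emph{a priori} $u_j$ is only bounded above, not harmonic, and one must rule out the possibility that $u_j$ dips below its limit on a small set while still integrating up to the harmonic limit. I would resolve this by invoking that an increasing limit of subharmonic functions, after upper regularization, is subharmonic and equals the limit a.e. (the Remark plus Theorem 3.4.2 of Ransford), combined with the fact that if the harmonic limit is the least harmonic majorant and equals $u_j^\ast$ a.e., then the Riesz mass of $u_j$ must vanish, making $u_j$ harmonic; Liouville then finishes the argument. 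This Riesz-mass vanishing step is where the real care is needed.
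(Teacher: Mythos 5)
Your reduction is the same as the paper's: set $u_j=\varphi_j-\phi$, note it is subharmonic and $\le 0$ because $\phi$ is harmonic and $\varphi_j\le\varphi\le\phi$, and try to conclude that $u_j$ is constant; once that is done, monotonicity and $\varphi^\ast=\phi$ give $c_1\le c_2\le\cdots$ and $c_j\to 0$ exactly as you say. But the constancy step, which is the entire content of the lemma, is where your argument has a genuine gap, and both of your proposed justifications fail. First, your claim that $\sup_{\mathbb C}u_j=0$ is false: the a.e.\ convergence $u_j\nearrow 0$ concerns the limit in $j$, not any fixed $u_j$; indeed in the very situation the lemma describes one has $u_j\equiv c_j<0$, whose supremum is $c_j$, not $0$. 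And even if the supremum were $0$, on the noncompact plane the maximum principle applies only if the supremum is attained, which for a nonconstant subharmonic function it need not be --- so that route is circular. Second, the ``Riesz mass must vanish'' claim cannot follow from a.e.\ convergence of the increasing sequence to a harmonic function: on the unit disc, $u_j(z)=\frac1j\log|z|$ is nonpositive, subharmonic, and increases to $0$ a.e., yet $\frac{1}{2\pi}\Delta u_j=\frac1j\delta_0\neq 0$ for every $j$. So nothing about the limit ``saturates'' the sub-mean-value inequality of a fixed $u_j$; also $0$ is not in general the least harmonic majorant of $u_j$ (for $u_j\equiv c_j$ it is $c_j$). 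The truth of the conclusion rests on a global, dimension-specific property of the plane that your argument never invokes.

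The missing ingredient, implicit in the paper's one-line proof (``$\varphi_j-\phi\le 0$, then $\varphi_j-\phi$ must be constant''), is the Liouville-type theorem for subharmonic functions on $\mathbb C$: any subharmonic function on the whole plane that is bounded above is constant. Each fixed $u_j$ satisfies its hypotheses individually, with no reference to the sequence or its limit. The standard proof: $M_j(r)=\max_{|z|=r}u_j$ is increasing and convex as a function of $\log r$ (Hadamard three circles, see Ransford's book cited in the paper); a convex function of $\log r\in(-\infty,+\infty)$ bounded above must be constant, and then the maximum principle forces $u_j$ to be constant. This is exactly where one uses that the dimension is one: on $\mathbb R^m$, $m\ge 3$, the function $-|x|^{2-m}$ is subharmonic, bounded above, and nonconstant, so any argument like yours that does not distinguish the plane from higher dimensions cannot be complete. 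With this lemma supplied, your concluding paragraph is fine: $u_j=c_j$, monotonicity gives $c_1\le c_2\le\cdots\le 0$, and since $\varphi=\phi+\lim_j c_j$ everywhere while $\varphi=\phi$ a.e.\ (the Remark you cite), we get $c_j\to 0$ and $\varphi=\phi$.
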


To prove the lemma, observe that $\varphi_j-  \phi \leq \varphi_j-  \varphi \leq 0$. Then $\varphi_j-\phi$ must be constant. Thus we have $\varphi=\phi$. The Lemma follows.

Then the theorem follows in the case when $\phi$ is harmonic.

We can generalize this to the following case:

Condition (A): The upper regulatization $\phi$ of $\varphi$ is a subharmonic function with the following property:
$\Delta \phi= \sum_i a_i\delta_{z_i}$ where $z_i$ is a  sequence in $\mathbb C$ and $a_i> 0$.

\begin{lemma}
In the case of (A), there exist nonpositive constants $c_j$ so that $\varphi_j=\varphi+c_j=\phi+c_j$.
\end{lemma}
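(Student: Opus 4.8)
\emph{Proof strategy.} The plan is to mimic the harmonic case: I will show that $u_j := \varphi_j-\phi$ agrees off a polar set with a function that is subharmonic and bounded above on all of $\C$, and then invoke the Liouville theorem for subharmonic functions (a subharmonic function on $\C$ that is bounded above is constant). Boundedness above is immediate, since $\varphi_j\le\varphi\le\phi$ gives $u_j\le 0$. The real content is to show that $u_j$ is subharmonic, which is no longer automatic because under (A) the weight $\phi$ is not harmonic: its Laplacian carries the point masses $\sum_i a_i\delta_{z_i}$.

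When the $z_i$ form a discrete set this is easy: off $\{z_i\}$ the weight $\phi$ is harmonic, so $u_j=\varphi_j-\phi$ is subharmonic there and $\le 0$, and since $\{z_i\}$ is polar the removable-singularity theorem for subharmonic functions bounded above across a polar set (see Ransford \cite{Rans95}) extends $u_j$ to a subharmonic function on $\C$. To cover the general case, and to make the argument independent of any discreteness assumption on the $z_i$, I would instead compare Riesz masses. Writing $\mu_j=\frac1{2\pi}\Delta\varphi_j$ and $\frac1{2\pi}\Delta\phi=\sum_i\frac{a_i}{2\pi}\delta_{z_i}$, the key claim is that
$$
\mu_j(\{z_i\})\ \ge\ \frac{a_i}{2\pi}\qquad\text{for every } i .
$$
This is where $\varphi_j\le\phi$ enters: the point mass of a subharmonic function at a point equals its Lelong number, $\mu_j(\{z_i\})=\nu(\varphi_j,z_i)=\lim_{r\to0}m(\varphi_j,z_i,r)/\log r$, where $m(u,z_0,r)$ is the mean of $u$ over $|z-z_0|=r$; since $m(\varphi_j,z_i,r)\le m(\phi,z_i,r)$ and $\log r<0$ for small $r$, dividing reverses the inequality and in the limit gives $\nu(\varphi_j,z_i)\ge\nu(\phi,z_i)=a_i/2\pi$. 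As $\frac1{2\pi}\Delta\phi$ is purely atomic, this pointwise bound upgrades to $\Delta\varphi_j\ge\Delta\phi$ as measures; applying the Riesz Decomposition Theorem on each disc then writes $\varphi_j-\phi$ as the logarithmic potential of the nonnegative measure $\frac1{2\pi}(\Delta\varphi_j-\Delta\phi)$ plus a harmonic term, hence subharmonic.

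With $u_j$ subharmonic and $\le 0$ on $\C$, Liouville forces $u_j\equiv c_j$ with $c_j\le 0$, so $\varphi_j=\phi+c_j$ first off the polar set $\{z_i\}$ and then everywhere (two subharmonic functions equal almost everywhere coincide). Finally the $c_j$ increase with $j$ and are $\le 0$, so $c_j\nearrow c\le 0$ and $\varphi=\phi+c$; comparing with $\varphi=\phi$ almost everywhere (the Remark) forces $c=0$, whence $\varphi=\phi$ and $\varphi_j=\varphi+c_j=\phi+c_j$ with each $c_j\le 0$, as claimed.

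I expect the mass comparison $\mu_j(\{z_i\})\ge a_i/2\pi$ to be the main obstacle: it is the only place where the inequality $\varphi_j\le\phi$ must be converted into information about the singular (polar) behaviour of $\varphi_j$, and it is exactly what lets the difference $\varphi_j-\phi$ be treated as a genuine subharmonic function rather than a formal difference of two functions that may both equal $-\infty$ on the atoms.
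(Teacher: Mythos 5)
Your proof is correct, and its overall skeleton coincides with the paper's: both arguments reduce the lemma to the Riesz-mass inequality $\Delta\varphi_j\geq \Delta\phi$, then write $\varphi_j=\phi+\lambda_j$ with $\lambda_j$ subharmonic and $\leq 0$ via the Riesz Decomposition Theorem, and conclude by Liouville for subharmonic functions. Where you genuinely diverge is in how the mass inequality is obtained. The paper fixes $N$, subtracts the finite logarithmic sum $\sum_{i=1}^{N}a_i\log|z-z_i|$ from $\varphi_j$, observes that the difference is subharmonic off $\{z_1,\dots,z_N\}$ and bounded above by $\psi_N$ there, and extends it across these points by the removable-singularity theorem for subharmonic functions bounded above near a polar set; this yields $\Delta\varphi_j\geq\sum_{i\leq N}a_i\delta_{z_i}$ for every $N$, hence $\Delta\varphi_j\geq\Delta\phi$. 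You instead compare Lelong numbers: from $\varphi_j\leq\phi$ and the identification of the atomic Riesz mass at a point with the Lelong number $\lim_{r\to 0}m(u,z_i,r)/\log r$, dividing the mean-value inequality by $\log r<0$ gives $\nu(\varphi_j,z_i)\geq\nu(\phi,z_i)$, and atomicity of $\Delta\phi$ (which both proofs use essentially) upgrades the pointwise bounds to the measure inequality. Your route buys independence from the extension lemma and makes explicit exactly where $\varphi_j\leq\phi$ is converted into singular behaviour of $\varphi_j$; the paper's route is more elementary, needing only removability across finitely many points at a time rather than the potential-theoretic identity $\mu_j(\{z_i\})=\nu(\varphi_j,z_i)$. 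You are also more careful than the paper in two respects worth keeping: the gluing of the local subharmonic representatives of $\varphi_j-\phi$ (they agree a.e.\ on overlaps, hence everywhere), and the final bookkeeping $c_j\nearrow c$, with $\varphi=\phi$ a.e.\ forcing $c=0$, which is what actually justifies the statement's double equality $\varphi_j=\varphi+c_j=\phi+c_j$.
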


\begin{proof}
Fix $N.$ We can write
$$
\phi=\sum\limits_{i=1}^N a_i \log|z-z_i|+\psi_N
$$
where $\psi_N$ is subharmonic and $\Delta \psi_N = \sum\limits_{i>N}a_i \delta_{z_i}$. We get for any $j, N$ 
$$
\varphi_j-\sum\limits_{i=1}^N a_i\log|z-z_i|\leq \psi_N
$$
for $z\neq z_i$. Then $\varphi_j-\sum\limits_{i=1}^N a_i\log|z-z_i|$  extends across $z_i$
as a subharmonic function $\psi_{j}^N.$ That is $\varphi_j=\sum\limits_{i=1}^Na_i\log|z-z_i|+\psi_{j}^N$ for any $N$ on $\mathbb C.$
Thus $\varphi_j=-\infty$ at $z_i$ and $\Delta \varphi_j\geq \sum\limits_{i=1}^N a_i\delta_{z_i}.$ It follows that $\Delta \varphi_j
\geq \Delta \phi$ on $\mathbb C$ for all $j.$ So we can find some subharmonic function $\lambda_j$ such that
 $\varphi_j=\phi +\lambda_j$.
But $\lambda_j\leq 0$ thus it must be constant. The Lemma follows and hence the theorem also follows in this case.
\end{proof}

Condition (B):
Let $\phi$ be a subharmonic function on $\mathbb C.$ Let $\mu$ denote the Laplacian of $ \frac {1}{2\pi}\phi.$
We say that $\phi$ satisfies condition (B) if there exist some constant $R>0$ and  $c>0$ such that on the disc $|\zeta| < R+ c$,  the mass of $\mu$ is equal to $\beta$, with $0<\beta <2$ and the mass of $\mu$ on the disc $|\zeta|<R$ is $\alpha>0.$
According to Proposition \ref{prop phipsi} and Proposition \ref{prop psi phi} with the same notation as there  we have the following Corollary:

\begin{corollary} \label{cor:equanorm}
If $\phi$ satisfies the above condition (B), then the spaces $L^2(\mathbb{C}, \phi)$ and $L^2(\mathbb{C}, \psi)$ are the same. Moreover the norms are equivalent.
\end{corollary}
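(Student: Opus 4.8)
The plan is to obtain the corollary by feeding condition (B) into the two preceding propositions, after checking that they are stated with respect to one and the same pair of weights. Write $\mu=\frac{1}{2\pi}\Delta\phi$. Condition (B) supplies radii $R>0$ and $c>0$ with $\mu(\Delta(R+c))=\beta\in(0,2)$ and $\mu(\Delta(R))=\alpha>0$. I would take $\epsilon=c$ and fix, once and for all,
$$
\phi_1(z)=\int_{|\zeta|<R}\log|z-\zeta|\,d\mu(\zeta),\qquad \phi_2(z)=\frac{\alpha}{2}\log(1+|z|^2),\qquad \sigma=\phi-\phi_1,\qquad \psi=\phi_2+\sigma.
$$
Since $\frac{1}{2\pi}\Delta\phi_1=\mu|_{\Delta(R)}$, we get $\frac{1}{2\pi}\Delta\sigma=\mu|_{\C\setminus\Delta(R)}\geq 0$, so $\sigma$ is subharmonic and $\psi$ is locally bounded above; thus $H(\phi)$ and $H(\psi)$ are genuine weighted Bergman spaces, complete by the discussion in the introduction. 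The key bookkeeping point is that the mass $M$ of $\mu|_{\Delta(R)}$ appearing in Proposition \ref{prop phipsi} coincides with the mass $\alpha$ of $\mu$ on $\Delta(R)$ appearing in Proposition \ref{prop psi phi}, so the single pair $(\phi,\psi)$ just defined is admissible in both statements.

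With this alignment in place, I would first apply Proposition \ref{prop phipsi} to the restricted measure $\mu|_{\Delta(R)}$ of mass $\alpha$: it produces a constant $C$ with $\|f\|_\psi^2\leq C\|f\|_\phi^2$ for every entire $f$, and in particular the continuous inclusion $H(\phi)\subset H(\psi)$. Next I would apply Proposition \ref{prop psi phi}, whose hypotheses hold because $\mu$ has mass $\beta\in(0,2)$ on $\Delta(R+\epsilon)$, $\frac{1}{2\pi}\Delta\phi=\mu$ there, and $\alpha$ is the mass on $\Delta(R)$; this yields the reverse set inclusion $H(\psi)\subset H(\phi)$. Combining the two gives $H(\phi)=H(\psi)$ as vector spaces, which is the first assertion of the corollary.

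It remains to promote the set equality to an equivalence of norms. One inequality, $\|f\|_\psi^2\leq C\|f\|_\phi^2$, is already in hand from Proposition \ref{prop phipsi}. For the reverse I would invoke the bounded inverse theorem: both $H(\phi)$ and $H(\psi)$ are complete, and by the previous paragraph the identity map $H(\phi)\to H(\psi)$ is a continuous linear bijection, so its inverse is automatically bounded. This furnishes a constant $C'$ with $\|f\|_\phi^2\leq C'\|f\|_\psi^2$, and the two norms are therefore equivalent, completing the proof.

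The only place demanding real care is the matching of notation in the first paragraph—confirming that the $\phi_1,\phi_2,\sigma,\psi$ of the two propositions literally coincide and that $M=\alpha$; after that the inclusions are direct citations. The mildly subtle point is the reverse norm bound, since Proposition \ref{prop psi phi} is phrased only as a set inclusion; I expect the cleanest route is the functional-analytic open-mapping argument above, although one could instead extract an explicit constant by retracing the estimates of Lemma \ref{le: estimateforlog} and Theorem \ref{ co:estimateforintegral} that drive that proof. I would also flag that the statement should be read for the holomorphic spaces $H(\phi),H(\psi)$ rather than the full $L^2$ spaces: holomorphicity is exactly what makes the singularities of $\phi_1$ of mass below $2$ harmless, whereas the weights $e^{-\phi}$ and $e^{-\psi}$ need not be pointwise comparable near the support of $\mu$.
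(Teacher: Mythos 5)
Your proposal is correct and follows the paper's own route: the paper obtains the corollary simply by citing Propositions \ref{prop phipsi} and \ref{prop psi phi} with the matched notation you set up (including the identification $M=\alpha$ and $\epsilon=c$), and your bounded-inverse argument cleanly supplies the norm equivalence that the paper leaves implicit (it can alternatively be read off quantitatively from the estimates inside the proof of Proposition \ref{prop psi phi}, as you note). Your closing caveat is also apt: the statement's $L^2(\mathbb{C},\phi)=L^2(\mathbb{C},\psi)$ must be read as $H(\phi)=H(\psi)$, since $e^{\psi-\phi}=e^{\phi_2-\phi_1}$ is unbounded near the support of $\mu|_{\Delta(R)}$ whenever the potential $\phi_1$ is unbounded below, and the corollary is indeed applied only to the holomorphic spaces in the proof of Lemma \ref{ConditionB}.
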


%\begin{lemma}\label{ConditionB}
%Suppose $\varphi$ satisfies the above Condition (B), Let $\varphi_1\leq \varphi_2\leq \varphi_3\leq \cdots $ be subharmonic functions on $\mathbb{C}$ with $\varphi = \lim\limits_{k} \varphi_k$. Then $\bigcup \limits _{k=1}^\infty H(\varphi_k)$ is dense in $H(\varphi)$.
%\end{lemma}

 \begin{lemma}\label{ConditionB}
 Let $\varphi_1\leq \varphi_2\leq \varphi_3\leq \cdots $ be subharmonic functions on $\mathbb{C}$ with $\varphi = \lim\limits_{k} \varphi_k$. Suppose the upper regulatization $\phi$ of $\varphi$ satisfies the above Condition (B).
Then $\bigcup \limits _{k=1}^\infty H(\varphi_k)$ is dense in $H(\varphi)$.
\end{lemma}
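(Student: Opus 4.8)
The plan is to reduce the statement to the already-established case $\epsilon>0$, i.e. Theorem \ref{th:main2}, by using the norm equivalence furnished by Corollary \ref{cor:equanorm}. Write $\mu=\frac{1}{2\pi}\Delta\phi$, let $\phi_1(z)=\int_{|\zeta|<R}\log|z-\zeta|\,d\mu(\zeta)$ be the potential of the part of $\mu$ inside $\Delta(R)$, set $\sigma=\phi-\phi_1$ (subharmonic, and harmonic on $\Delta(R)$) and $\phi_2(z)=\frac{\alpha}{2}\log(1+|z|^2)$, so that $\psi=\sigma+\phi_2$. By the Remark following Theorem \ref{th:main1} we have $H(\varphi)=H(\phi)$, and by Corollary \ref{cor:equanorm}, $H(\phi)=H(\psi)$ with equivalent norms. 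Consequently $\bigcup_k H(\varphi_k)$ is dense in $H(\varphi)$ if and only if it is dense in $H(\psi)$. The point of this reduction is that $\psi$ now has exactly the shape treated in Theorem \ref{th:main2}: a subharmonic function $\sigma$ plus the strictly positive multiple $\frac{\alpha}{2}\log(1+|z|^2)$ of the logarithmic weight, so that the role of $\epsilon$ is played by $\alpha/2>0$.

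Next I would transport Condition (B), hence the same equivalence, to the individual $\varphi_k$. Writing $\mu_k=\frac{1}{2\pi}\Delta\varphi_k$, the increasing convergence $\varphi_k\nearrow\varphi$ gives $\mu_k\to\mu$ in the weak-$\ast$ sense; by the portmanteau inequalities, $\limsup_k\mu_k(\overline{\Delta(R+c')})\leq\mu(\Delta(R+c))=\beta<2$ for any $c'<c$, while $\liminf_k\mu_k(\Delta(R))\geq\mu(\Delta(R))=\alpha>0$. Hence there is $k_0$ so that for $k\geq k_0$ the function $\varphi_k$ again satisfies Condition (B) (with radii $R,c'$ and masses $\alpha_k\in(0,2)$, $\beta_k<2$). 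Applying Corollary \ref{cor:equanorm} to each such $\varphi_k$ yields $H(\varphi_k)=H(\psi_k)$ with equivalent norms, where $\psi_k=\sigma_k+\frac{\alpha_k}{2}\log(1+|z|^2)$, the function $\sigma_k=\varphi_k-\phi_1^{(k)}$ is subharmonic (and harmonic on $\Delta(R)$), and $\phi_1^{(k)}$ is the potential of $\mu_k|_{\Delta(R)}$. Since the union is increasing we may discard $k<k_0$, so that $\bigcup_k H(\varphi_k)=\bigcup_{k\geq k_0}H(\psi_k)$ as subspaces of the fixed Hilbert space $H(\psi)$.

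It then remains to feed a single monotone subharmonic sequence into Theorem \ref{th:main2} and to pull its density conclusion back through these equivalences, and here lies the main obstacle. The sequence $\sigma_k=\varphi_k-\phi_1^{(k)}$ is subharmonic but need not be monotone in $k$, because the potentials $\phi_1^{(k)}$ drift as the masses $\mu_k|_{\Delta(R)}$ only converge weakly; conversely, the manifestly monotone candidate $\varphi_k-\phi_1$ obtained by subtracting the fixed limit potential is not subharmonic. Reconciling monotonicity with subharmonicity, together with the fact that the coefficient $\alpha_k\to\alpha$ on the logarithmic term also varies with $k$ whereas Theorem \ref{th:main2} is stated for a fixed $\epsilon$, is the crux of the argument. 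I would resolve it by first establishing the potential-theoretic convergence $\phi_1^{(k)}\to\phi_1$ in $L^1_{\mathrm{loc}}$, controlling the logarithmic singularities through the uniform integrability coming from the mass bound $<2$ in Theorem \ref{ co:estimateforintegral}, which forces $\sigma_k\to\sigma$ and $\alpha_k\to\alpha$; then sandwiching $\sigma_k$ between two genuinely increasing subharmonic sequences with the common limit $\sigma$ and a fixed logarithmic coefficient, so that Theorem \ref{th:main2} applies to the sandwich and the norm equivalences, with constants kept uniform in $k$ via $\alpha_k\to\alpha<2$ and Theorem \ref{ co:estimateforintegral}, transfer the resulting density back to $\bigcup_{k\geq k_0}H(\psi_k)=\bigcup_k H(\varphi_k)$. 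This last bookkeeping, keeping the equivalence constants uniform while passing to the limit in $k$, is where most of the care will be needed.
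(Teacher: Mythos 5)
Your first two paragraphs reproduce the paper's own setup almost verbatim: weak-$*$ convergence of the Laplacians $\mu_k\to\mu$ to get $\mu_k(\Delta(R+c'))\in(0,2)$ and $\alpha_k>\alpha/2$ for large $k$, the Riesz decomposition $\varphi_k=\widetilde\varphi_k+\int_{|\zeta|<R}\log|z-\zeta|\,d\mu_k(\zeta)$, and Corollary \ref{cor:equanorm} giving $H(\psi_k)=H(\varphi_k)$ and $H(\psi)=H(\varphi)$ with equivalent norms. You also correctly identify the obstruction to finishing by citing Theorem \ref{th:main2} as a black box: the harmonic-part sequence $\sigma_k=\widetilde\varphi_k$ is not monotone (the potentials $\phi_1^{(k)}$ have moving singularities), the coefficients $\alpha_k$ vary with $k$, while the monotone candidate $\varphi_k-\phi_1$ is not subharmonic. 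But the device you propose to overcome this --- ``sandwiching $\sigma_k$ between two genuinely increasing subharmonic sequences with common limit $\sigma$'' --- is asserted, not constructed, and this is exactly where the argument is incomplete. To transfer density you would need an increasing subharmonic sequence $\tau_k\nearrow\tau$ with $H(\tau_k+\epsilon\log(1+|z|^2))\subset H(\psi_k)$ for each $k$ and $H(\tau+\epsilon\log(1+|z|^2))=H(\psi)$; the inclusion requires two-sided control of $\sigma_k-\tau_k$ near the singularities of $\phi_1^{(k)}$, and the $L^1_{\mathrm{loc}}$ convergence $\sigma_k\to\sigma$ that you invoke gives no pointwise control there. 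No natural candidate for $\tau_k$ survives both requirements (regularized suprema are monotone but sit on the wrong side; subtracting the fixed potential or a multiple of $\log(1+|z|^2)$ destroys subharmonicity), so the crux of the proof is a genuine gap, not bookkeeping.

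The paper sidesteps precisely this difficulty by \emph{not} reducing to Theorem \ref{th:main2}, but rerunning its $\bar\partial$-proof directly on the modified weights, for which monotonicity is never needed. After the same reduction, fix $f\in H(\varphi)=H(\phi')$, set $\psi=-\log\left(\log(e+|z|^2)\right)$ and a cut-off $\chi\left(\log(-\psi)+\log\epsilon\right)$, and solve $\bar\partial u_{j,\epsilon}=f\,\bar\partial\chi(\cdot)$ via Berndtsson's estimate (Lemma \ref{Boberndtsson2001}) with $\Phi_j=\varphi_j'+\frac{\alpha_j}{4}\psi$ and $\Psi_j=\frac{\alpha_j}{4}\psi$, where $\varphi_j'=\widetilde\varphi_j+\frac{\alpha_j}{2}\log(e+|z|^2)$; the uniform lower bound $\alpha_j\geq\alpha/2$ keeps all constants under control, which is the role your proposal assigned to the unproven sandwich. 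Monotonicity of the original sequence enters only twice: through the strong openness theorem (Theorem \ref{th:opennessconjecture}) applied to $\varphi_j$ on $K=\{-\psi\leq 1/\epsilon\}$, guaranteeing $\int_K|f|^2e^{-\varphi_j}\,d\lambda<\infty$ for $j\geq j_0$ (and then $\int_K|f|^2e^{-\varphi_j'}\,d\lambda\leq C\int_K|f|^2e^{-\varphi_j}\,d\lambda$ by Lemma \ref{le:guji}), and in the final limit $\limsup_{j\to\infty}\|F_{j,\epsilon}-f\|^2_{L^2(\mathbb C,\varphi)}\leq 2\int_{-\psi\geq\frac{1}{2\epsilon}}|f|^2e^{-\varphi}\,d\lambda+C'\epsilon^2\int_K|f|^2e^{-\varphi}\,d\lambda$, where $F_{j,\epsilon}=f\chi(\cdot)-u_{j,\epsilon}\in H(\varphi_j'), = H(\varphi_j)$, after which $\epsilon\to 0$ finishes. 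To repair your proof, replace the sandwich step by this direct $\bar\partial$-argument (i.e., redo the proof of Theorem \ref{th:main2} with the weights $\varphi_j'$ rather than quoting its statement).
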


To prove Lemma \ref{ConditionB} we need the following $L^2$-estimate by Berndtsson (see \cite{Boberndtsson2001}).

\begin{lemma} [\cite{Boberndtsson2001}]\label{Boberndtsson2001}
Let $\Omega \subset \mathbb{C}^n$ be a pseudoconvex domain and $\varphi \in psh(\Omega)$.  Suppose $\psi$ is a $C^2$ real function satisfying
$$
r i\partial \overline{\partial} (\varphi+\psi) \geq i\partial \psi \wedge \overline{\partial} \psi
$$
in the sense of distributions for some $0<r<1$. Then for each $\overline{\partial}$-closed $(0,1)$-form $v$, there is a  solution $u$ of $\overline{\partial} u =v$ which satisfies
\begin{eqnarray}\label{ineq:estimate1}
\int _{\Omega} |u|^2e^{\psi-\varphi}d\lambda \leq \frac{6}{(1-r)^2} \int _{\Omega} |v|_{i\partial \overline{\partial}(\varphi+\psi)}^2e^{\psi-\varphi}d\lambda
\end{eqnarray}
in the sense of distributions.
\end{lemma}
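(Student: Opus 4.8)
The statement is the sharp Donnelly--Fefferman $L^2$-estimate in the form due to Berndtsson, and the plan is to prove it by the twisted $L^2$-method, deducing it from H\"ormander's classical estimate. First I would record that the hypothesis forces $\phi := \varphi+\psi$ to be plurisubharmonic, since $i\partial\overline{\partial}\phi \ge r^{-1}\, i\partial\psi\wedge\overline{\partial}\psi \ge 0$; the target weight is then $e^{\psi-\varphi}=e^{2\psi-\phi}$, so the lemma is exactly the Donnelly--Fefferman estimate for the pair $(\phi, 2\psi)$. Because the conclusion is asserted in the sense of distributions, I would reduce to regular data: exhaust $\Omega$ by smoothly bounded strictly pseudoconvex subdomains and replace $\varphi$ by a decreasing sequence of smooth strictly plurisubharmonic functions, prove the estimate with the stated constant in this smooth setting, and recover the general case by a monotone limit, the constant $\tfrac{6}{(1-r)^2}$ being uniform throughout.

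In the regular setting I would work in the Hilbert space $L^2(\Omega, e^{\psi-\varphi})$ and invoke H\"ormander's functional-analytic duality lemma: to produce a solution $u$ of $\overline{\partial}u=v$ obeying the asserted bound, where $\Theta:=i\partial\overline{\partial}(\varphi+\psi)$ and $|v|^2_\Theta=\langle \Theta^{-1}v,v\rangle$, it suffices to control $|\langle v,\alpha\rangle|^2$ for $(0,1)$-forms $\alpha\in\mathrm{Dom}(\overline{\partial}^{*})$; since $v$ is $\overline{\partial}$-closed one restricts to $\alpha\in\ker\overline{\partial}$. Estimating the pairing by Cauchy--Schwarz against $\Theta$,
$$
|\langle v,\alpha\rangle|^2\le\Big(\int_\Omega|v|^2_{\Theta}\,e^{\psi-\varphi}\,d\lambda\Big)\Big(\int_\Omega\langle\Theta\,\alpha,\alpha\rangle\,e^{\psi-\varphi}\,d\lambda\Big),
$$
so the entire problem reduces to the single curvature a priori estimate
$$
\int_\Omega\langle\Theta\,\alpha,\alpha\rangle\,e^{\psi-\varphi}\,d\lambda\le\frac{6}{(1-r)^2}\int_\Omega|\overline{\partial}^{*}\alpha|^2\,e^{\psi-\varphi}\,d\lambda,\qquad\alpha\in\ker\overline{\partial}\cap\mathrm{Dom}(\overline{\partial}^{*}).
$$

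The engine for this inequality is the twisted Bochner--Kodaira--Morrey--Kohn--H\"ormander identity, applied with a positive smooth twist built from $\psi$ together with the standard auxiliary positive function. Feeding the identity the closedness $\overline{\partial}\alpha=0$ and discarding the manifestly nonnegative $\overline{\partial}$-derivative term yields a lower bound for $\int_\Omega|\overline{\partial}^{*}\alpha|^2 e^{\psi-\varphi}\,d\lambda$ whose leading contribution is the curvature term $\int_\Omega\langle\Theta\,\alpha,\alpha\rangle e^{\psi-\varphi}\,d\lambda$, corrected by a first-order cross term linear in $\overline{\partial}\psi$ and in $\overline{\partial}^{*}\alpha$, and by a zero-order gradient term of the shape $\int_\Omega\langle i\partial\psi\wedge\overline{\partial}\psi\,\alpha,\alpha\rangle e^{\psi-\varphi}\,d\lambda$. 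The hypothesis $i\partial\psi\wedge\overline{\partial}\psi\le r\,\Theta$ is precisely what bounds this gradient term by $r$ times the curvature term, while the cross term is absorbed by Cauchy--Schwarz with a free parameter $t>0$ into a fraction of $\int_\Omega|\overline{\partial}^{*}\alpha|^2 e^{\psi-\varphi}\,d\lambda$ plus a further multiple of the gradient term.

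The main obstacle is exactly this bookkeeping: one must choose the twist, the auxiliary function, and the parameter $t$ so that after absorbing both corrections the surviving coefficient in front of $\int_\Omega\langle\Theta\,\alpha,\alpha\rangle e^{\psi-\varphi}\,d\lambda$ is strictly positive --- which is possible precisely because $r<1$ --- and so that optimizing over $t$ yields the clean constant $\tfrac{6}{(1-r)^2}$. Arranging that the \emph{effective} curvature in the identity is $i\partial\overline{\partial}(\varphi+\psi)$ rather than that of the naive weight $\varphi-\psi$, and tracking the numerical constant through the completed squares, is the delicate computational heart of the argument. The remaining points are routine: density of smooth compactly supported forms in the graph norm of $\overline{\partial}^{*}$ (via a pseudoconvex exhaustion and an Andreotti--Vesentini cut-off, which also legitimizes working with compactly supported $\alpha$), and the uniformity of the constant under the regularization, which lets the smooth estimate pass to the distributional limit.
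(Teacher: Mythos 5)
There is nothing in the paper to compare your argument against: the authors state this lemma with the citation \cite{Boberndtsson2001} and use it as a black box, giving no proof of their own. So the relevant benchmark is Berndtsson's original argument, and your proposal is essentially a correct reconstruction of it: the observation that $\varphi+\psi$ is psh and that the weight $e^{\psi-\varphi}=e^{2\psi-(\varphi+\psi)}$ puts the statement in Donnelly--Fefferman form, the duality reduction to the a priori inequality
$$
\int_\Omega\langle\Theta\,\alpha,\alpha\rangle e^{\psi-\varphi}\,d\lambda\leq \frac{6}{(1-r)^2}\int_\Omega|\overline{\partial}^{*}\alpha|^2e^{\psi-\varphi}\,d\lambda
$$
for $\alpha\in\ker\overline{\partial}\cap\mathrm{Dom}(\overline{\partial}^{*})$, and the twisted Bochner--Kodaira identity with the cross term absorbed via a free parameter and the gradient term controlled by the hypothesis $i\partial\psi\wedge\overline{\partial}\psi\leq r\,\Theta$ --- this is exactly the mechanism behind the cited result. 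Two caveats keep this at the level of a sound plan rather than a complete proof. First, you explicitly defer the ``bookkeeping'' that produces the constant, which is where the entire quantitative content of $\tfrac{6}{(1-r)^2}$ lives; this is harmless only because standard executions of your scheme (Berndtsson's own, and B\l ocki's sharpened versions) deliver constants of the form $C/(1-r)^2$ with $C\leq 6$, so the stated bound follows a fortiori. Second, your regularization step needs a word of care: you regularize $\varphi$ while keeping $\psi$ fixed, and the pointwise hypothesis $r\,i\partial\overline{\partial}(\varphi_\varepsilon+\psi)\geq i\partial\psi\wedge\overline{\partial}\psi$ is not literally inherited by convolution (only the convolved inequality is, via Jensen applied to $i\partial\psi\wedge\overline{\partial}\psi$); since $\psi$ is $C^2$ this is repaired by a locally uniform limit and a slightly enlarged $r'<1$ that is shrunk back at the end, but it should be said rather than folded into ``a monotone limit.'' Neither point is a wrong turn; both are places where the outline must be filled in before it counts as a proof.
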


%Now we begin to prove Lemma \ref{ConditionB}. 
\begin{proof} [ Proof of Lemma \ref{ConditionB} ] 
Put {\bf$\frac{1}{2\pi} \Delta \phi = \mu$}, $\frac{1}{2\pi} \Delta \varphi_j = \mu_j$ for each $j$. The mass of $\mu$ (resp. $\mu_j$) on the disc $\Delta(R)$ will be denoted by $\alpha$ (resp. $\alpha_j$).
%According to the hypothesis of Condition (B), we know that there exists one unit disc, if necessary we can %change coordinate, such that $\alpha >0$ and exists a constant $c>0$ such that the mass of $\mu$ on the disc 
%$\Delta(1+c)$ is belong to $(0,2)$. 
Since $\varphi_j$ is increasing to $\varphi$ and $\varphi=\phi$ a.e., we have that $\Delta \varphi_j$ converges to $\Delta \phi$ in the sense of distributions. That means we can find some $0<c' < c$ with the mass of $\mu_j$ on the disc $\Delta(R+c') $  belongs to $(0,2)$ when $j$ is sufficient large. Moreover the mass of $\mu_j$ on the disc $\Delta(R)$ is $\alpha_j>\alpha/2$ for all sufficiently large $j.$
By using  Riesz decomposition theorem we can write
$$
 \phi = \widetilde{\phi}+  \int_{|\zeta| < R} \log |z-\zeta|d \mu(\zeta), \ \ \ 
\varphi_j = \widetilde{\varphi}_j +  \int_{|\zeta| < R} \log |z-\zeta|d \mu_j(\zeta), \ \ \  \forall j.
$$
Here $\widetilde{\phi}$ and $\widetilde{\varphi}_j$ are subharmonic functions on $\mathbb{C}$.
Put
$$
\varphi'_j = \widetilde{\varphi}_j + \frac{\alpha_j}{2} \log(1+|z|^2), \ \ \  \forall j
$$
and 
$$
\phi'= \widetilde{\phi} + \frac{\alpha}{2} \log(1+|z|^2).
$$
By Corollary \ref{cor:equanorm} we have that $H(\varphi'_j) = H(\varphi_j)$ for each $j$ and $H(\phi')=H(\phi) = H(\varphi).$
The following proof is very similar to \cite{Fornaess2017}, we make some changes.
Here for convenience we replace $\log(1+|z|^2)$ by $\log(e+|z|^2)$ without changing the spaces and the norms because of equivalence.
Let $\chi : \mathbb{R} \rightarrow [0,1]$ be a smooth function on $\mathbb{C}$ satisfying $\chi | _{(-\infty, \log \frac 12)} = 1$, $\chi | _{(0, +\infty)} = 0$ and $|\chi '| \leq 3$. Set
$$
\psi = - \log\left(\log(e+|z|^2)\right).
$$
Then we have
$$
i \partial\overline{\partial} \psi = - i\frac{\partial\overline{\partial} \log(e+|z|^2)}{\log(e+|z|^2)} + i\frac{ \partial \log(e+|z|^2) \wedge \overline{\partial}\log(e+|z|^2) }{  (\log(e+|z|^2))^2}
$$
and
$$
i \partial \psi\wedge  \overline{\partial}\psi=  i\frac{ \partial \log(e+|z|^2) \wedge \overline{\partial}\log(e+|z|^2)}{(\log(e+|z|^2))^2}.
$$
For each $j$, put $\Phi_j: = \varphi'_j +\frac{\alpha_j}{4} \psi = \widetilde{\varphi}_j  + \frac{\alpha_j}{2} \log(e+|z|^2)+ \frac{\alpha_j}{4} \psi$
and $\Psi_j =  \frac{\alpha_j}{4} \psi$. By calculation we have 
\begin{eqnarray}
 i \partial\overline{\partial}  (\Phi_j +\Psi_j ) 
%= i \partial\overline{\partial} \left(\widetilde{\varphi}_j + \frac{\alpha_j}{2} \log(e+|z|^2)+ \frac{\alpha_j}{2} \psi\right)  \nonumber \\
%&=&  i \partial\overline{\partial} \widetilde{\varphi}_j+   \frac{\alpha_j}{2}  i\partial\overline{\partial} \log(e+|z|^2) +  \frac{\alpha_j}{2}i \partial\overline{\partial} \psi \nonumber \\
&\geq & \frac{\alpha_j}{2} i\partial\overline{\partial} \log(e+|z|^2) -\frac{\alpha_j}{2}i \frac{  \partial\overline{\partial} \log(e+|z|^2)}{\log(e+|z|^2)} \nonumber \\
&&+ \frac{\alpha_j}{2}  i\frac{ \partial \log(e+|z|^2) \wedge \overline{\partial}\log(e+|z|^2) }{  (\log(e+|z|^2))^2} \nonumber \\
%&= &\frac{\alpha_j}{2}  \left(1- \frac{1}{ \log(e+|z|^2)}\right)i \partial\overline{\partial} \log(e+|z|^2)   \nonumber \\
%&&+ \frac{\alpha_j}{2}  i\frac{ \partial \log(e+|z|^2) \wedge \overline{\partial}\log(e+|z|^2) }{  (\log(e+|z|^2))^2} \nonumber \\
&\geq &\frac{\alpha_j}{2}  \partial \psi \wedge \overline{\partial} \psi \nonumber\\
& = &  \frac{8}{ \alpha_j }  \partial \Psi_j \wedge \overline{\partial} \Psi_j \nonumber
\end{eqnarray}
while $0<\frac{\alpha_j}{8}<1$.
Let $f\in H( \varphi ) = H( \phi')  $.  We fix an $\epsilon >0$.
Put
$$
v_\epsilon : = f \cdot \overline{\partial}\chi\left( \log (-\psi)+ \log \epsilon  \right).
$$
Apply Lemma \ref{Boberndtsson2001} especially for $\Omega = \mathbb{C}$ with $\varphi$ and $\psi$ replaced by $\Phi_j$ and $\Psi_j$ respectively, we then obtain a solution $u_{j,\epsilon}$ of $\overline{\partial} u= v_\epsilon$ on $\mathbb{C}$ satisfying
\begin{eqnarray*}
   \int_{\mathbb{C}} |u_{j,\epsilon}|^2 e^{-\varphi'_j }d\lambda 
%&= & \int_{\mathbb{C}} |u_{j,\epsilon}|^2 e^{\Psi_j -\Phi_j }d\lambda  \nonumber \\
&\leq & \frac{6}{(1-\frac{\alpha_j}{8})^2}  \int_{\mathbb{C}} \left |f \overline{\partial}\chi (\cdot)\right|_{\partial \overline{\partial}(\Phi_j+ \Psi_j ) } ^2 e^{ \Psi_j -\Phi_j }d\lambda \nonumber \\
%& \le &96 \int_{\frac{1}{2\epsilon} \leq -  \psi \leq \frac{1}{\epsilon}} |f|^2 \frac{1}{\psi ^2} \left| \overline{\partial} \psi \right|_{\frac{\alpha_j}{2} i \partial \psi \wedge  \overline{\partial} \psi }^2 e^{-\varphi'_j}d\lambda\nonumber \\
& \le & \frac{C}{\alpha_j}\epsilon^2 \int_{\frac{1}{2\epsilon} \leq  -  \psi \leq \frac{1}{\epsilon}}  |f|^2 e^{-\varphi'_j}d\lambda.\nonumber \\
\end{eqnarray*}
Put $K: = \{z: z\in \mathbb{C}, \ \ -\psi\le  \frac{1}{ \epsilon} \} $. Since $f\in H(\varphi)$, according to the strong openness theorem there exists $j_0$ so that when $j \geq j_0 $ we have $\int_K |f|^2 e^{-\varphi_{j}}d\lambda<\infty.$

%Let $q\in K$ and let $\Delta(q)$ be a disc centered at $q.$ Since $f\in H(\varphi),$ we have that $\int_{\Delta(q)}|f|^2 e^{-\varphi}d\lambda<\infty.$
%According to \textcolor{blue}{the strong openness theorem} \textcolor{blue}{\sout{ theorem \ref{strongotinone} }} there \textcolor{blue}{exist}  \textcolor{blue}{\sout{ exists}} a $j_q\ge 1$ and a number $0<r_q<1$ so that $\int_{\Delta_{r_q}(q)}|f|^2 e^{-\varphi_{j_q}}d\lambda<\infty.$

%By compactness there are finitely many $q_i\in K, j_i\in \mathbb N, 1\le i \le m$ so that
%$K\subset \cup_{i=1}^m \Delta_{r_i}(q_i)$ and $\int_{\Delta_{r_i}(q_i)} |f|^2 e^{-\varphi_{j_i}}d\lambda<\infty.$ Let $j_0=\max \{j_i\}.$ Then when $j \geq j_0 $ we have $\int_K |f|^2 e^{-\varphi_{j}}d\lambda<\infty.$
Next by using  Lemma \ref{le:guji} and Proposition \ref{prop phipsi} we obtain that for some constant $C,$ independent of $j$ and $K$  
\begin{eqnarray*}
 \int_K |f|^2 e^{-\varphi'_j} d\lambda & =&  \int_K |f|^2 e^{-\frac{\alpha_j}{2}\log(e+|z|^2)  - \widetilde{\varphi}_j} d\lambda  \nonumber \\
& \leq & C \int_K |f|^2 e^{-\int_{|\zeta|<R} \log|z-\zeta| d\mu_j(\zeta) - \widetilde{\varphi}_j} d\lambda \nonumber \\
&= &C \int_K |f|^2 e^{-\varphi_j} d\lambda <\infty.  \nonumber \\
\end{eqnarray*}
Set
$$
F_{j,\epsilon} = f \cdot\chi\left( \log (- \psi)+ \log \epsilon   \right) - u_{j,\epsilon}.
$$
Then $F_{j,\epsilon}$ is an entire function for each $j \geq j_0 \gg1$ with
$$
 \|F_{j,\epsilon}\|_{L^2(\mathbb{C}, \varphi'_j)} \leq (1+ \frac{C}{\sqrt{\alpha_j}} \epsilon ) \| f\|_{L^2(K, \varphi'_j)}  < +\infty.
$$
%\begin{eqnarray*}
%&& \|F_{j,\epsilon}\|_{H(\varphi'_j)} \leq \left\| f \cdot\chi\left( \log (-\psi)+ \log \epsilon   \right)\right\|_{H(\varphi'_j)} + \|u_{j,\epsilon}\|_{H(\varphi'_j)}\nonumber \\
%&\leq & \left(\int _{-\psi \leq\frac{1}{\epsilon} } |f|^2 e^{-\varphi'_j}d\lambda\right)^{\frac{1}{2}} + \left( \int_{\mathbb{C}} |u_{j,\epsilon}|^2 e^{-\varphi'_j}d\lambda\right)^{\frac{1}{2}}\nonumber \\
%& \leq & (1+ \frac{C}{\sqrt{\alpha_j}} \epsilon ) \left(\int _K |f|^2 e^{-\varphi'_j}d\lambda\right)^{\frac{1}{2}}  < +\infty.
%\end{eqnarray*}
%$$
 %\|F_{j,\epsilon}\|_{H(\varphi'_j)} \leq (1+ \frac{C}{\sqrt{\alpha_j}} \epsilon ) \left(\int _K |f|^2 e^{-\varphi'_j}d\lambda\right)^{\frac{1}{2}}  < +\infty.
%$$
That is $F_{j,\epsilon }\in \bigcup\limits_{j=1}^\infty H(\varphi'_j) = \bigcup\limits_{j=1}^\infty H(\varphi_j) $. We also obtain
\begin{eqnarray*} \label{eq:estimate12}
 \| F_{j,\epsilon}- f\|^2_{L^2(\mathbb{C}, \varphi)} 
%& = &  \int_{\mathbb{C}} \left| f \cdot\chi\left( \log (- \psi)+ \log\epsilon   \right) - u_{j,\epsilon}- f\right|^2 e^{-\varphi} d\lambda \nonumber \\
%&\leq & 2 \int_{\mathbb{C}} \left| f \cdot\chi\left( \log (- \psi)+ \log \epsilon  \right)- f\right|^2 e^{-\varphi} d\lambda + 2 \int_{\mathbb{C}} | u_{j,\epsilon} |^2 e^{-\varphi_j} d\lambda \nonumber \\
& \leq &  2 \int_{-\psi \geq \frac{1}{2\epsilon}} |f|^2 e^{-\varphi}d \lambda
+  C \int_{\mathbb{C}} | u_{j,\epsilon} |^2 e^{-\varphi'_j} d\lambda   \nonumber \\
%& \leq &  2 \int_{-\psi \geq \frac{1}{2\epsilon}} |f|^2 e^{-\varphi}d \lambda+
%\frac{C}{\alpha_j}\epsilon^2  2 \int_K |f|^2 e^{-\varphi_j' }d \lambda \nonumber \\
& \leq &  2 \int_{-\psi \geq \frac{1}{2\epsilon}} |f|^2 e^{-\varphi}d \lambda+
C'\epsilon^2  \int_K |f|^2 e^{-\varphi_j}d \lambda. \nonumber \\
\end{eqnarray*}
Still keeping $\epsilon$ fixed, but letting $j\rightarrow \infty,$ we get
$$
\limsup_{j\rightarrow \infty} \| F_{j,\epsilon}- f\|^2_{L^2(\mathbb{C}, \varphi)} 
\leq 2 \int_{-\psi \geq \frac{1}{2\epsilon}} |f|^2 e^{-\varphi}d \lambda+
C'\epsilon^2  \int_K |f|^2 e^{-\varphi}d \lambda.
$$
Finally we let $\epsilon \rightarrow 0.$
Then $\bigcup \limits _{j=1}^\infty H(\varphi_j)$ is dense in $H(\varphi)$, which completes the proof.
\end{proof}

For any subharmonic {\bf $\phi$} on $\mathbb{C}$, we let $\mu=\frac{1}{2\pi} \Delta \phi$ which is a locally finite positive measure on $\mathbb C.$ Then $\mu$ decomposes
into a sum $\mu=\mu_1+\mu_2$ where $\mu_2=\sum_i a_i\delta_{z_i}$ is an at most countable
sum of Dirac masses and where $\mu_1$ has no point masses. Thus  Theorem \ref{th:main1} follows as above,
using  Condition (B) because of  the following lemma. 

\begin{lemma}
Suppose $\mu_1$ is not identically zero. Then there exist a point $z_0\in \mathbb C$ and $0<r<s$
so that $\mu(\Delta(z_0,r))>0, \mu(\Delta(z_0,s))< 2.$
\end{lemma}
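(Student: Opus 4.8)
The plan is to produce the center $z_0$ as a support point of the non-atomic part $\mu_1$ that happens to avoid the countably many atoms of $\mu$, and then shrink the outer radius by invoking continuity of $\mu$ from above. First I would record the basic measure-theoretic fact that a non-atomic positive measure assigns zero mass to every countable set; in particular $\mu_1(\{z_i\}_i)=0$, since the atoms $z_i$ of $\mu_2$ form an at most countable family. Because $\mu_1\not\equiv 0$, its support $\operatorname{supp}(\mu_1)$ is nonempty and carries all of its mass, i.e. $\mu_1(\operatorname{supp}(\mu_1))=\mu_1(\mathbb{C})>0$. Hence $\mu_1\big(\operatorname{supp}(\mu_1)\setminus\{z_i\}_i\big)>0$, so this set is nonempty and I may choose a point $z_0\in\operatorname{supp}(\mu_1)$ with $z_0\neq z_i$ for every $i$.

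Next I would observe that, with this choice, $z_0$ is not an atom of $\mu$: indeed $\mu(\{z_0\})=\mu_1(\{z_0\})+\mu_2(\{z_0\})=0+0=0$, using that $\mu_1$ has no point masses and that $z_0$ lies outside the atomic set $\{z_i\}_i$ supporting $\mu_2$. Since $\mu$ is locally finite, the closed discs $\{|z-z_0|\leq s\}$ have finite mass for small $s$ and decrease to $\{z_0\}$ as $s\searrow 0$, so continuity of the measure from above yields $\mu(\{|z-z_0|\leq s\})\to\mu(\{z_0\})=0$. Therefore there is some $s>0$ with $\mu(\Delta(z_0,s))\leq\mu(\{|z-z_0|\leq s\})<2$.

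For the inner radius I would then take any $r$ with $0<r<s$. Because $z_0\in\operatorname{supp}(\mu_1)$, every open ball about $z_0$ has strictly positive $\mu_1$-mass, so $\mu(\Delta(z_0,r))\geq\mu_1(\Delta(z_0,r))>0$. Combined with the outer-radius bound, this produces exactly the required $0<r<s$ with $\mu(\Delta(z_0,r))>0$ and $\mu(\Delta(z_0,s))<2$, which (after translating $z_0$ to the origin) furnishes the discs $\Delta(R)\subset\Delta(R+c)$ demanded by Condition (B).

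The one delicate point is the very first step: guaranteeing that a support point of $\mu_1$ can be selected away from all the atoms. This rests entirely on the fact that a non-atomic measure gives no mass to a countable set, so the atoms cannot exhaust the support of the nonzero measure $\mu_1$; once $z_0$ is pinned down, the remaining two steps are routine applications of the definition of support and of continuity of measures from above.
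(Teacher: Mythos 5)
Your proof is correct and follows essentially the same route as the paper: pick $z_0$ in $\operatorname{supp}(\mu_1)$ avoiding the countably many atoms $z_i$ (the paper does this via uncountability of the support, you via $\mu_1(\{z_i\}_i)=0<\mu_1(\operatorname{supp}\mu_1)$ --- the same avoidance argument in measure-theoretic rather than cardinality language), then use $\mu(\{z_0\})=0$ and continuity from above to shrink $s$, and the definition of support for the inner radius $r$. All steps, including the appeal to local finiteness of $\mu$ for continuity from above, are sound.
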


\begin{proof}
The support of the measure $\mu_1$ is uncountable. Hence we can choose a point $z_0$ in the support of $\mu_1$ which is not one of the $z_i$. Then $\mu(\Delta(z_0,s))\rightarrow 0$ as $s\rightarrow 0$ while
$\mu(\Delta(z_0,r))>0$ for all $r>0.$ 
\end{proof}

\section{Proof of Theorem \ref{th:bergmanconver} and Theorem \ref{Hartogsconvergence}}

\begin{proof}[Proof of theorem \ref{th:bergmanconver}]
We assume that if the upper regulation $\phi$ of $\varphi$ satisfies Condition (B).  For other $\phi$, we may use the same method as in the proof of Theorem \ref{th:main1}, we skip the details. Let $\chi$, $\psi$, $\Psi_j, \Phi_j$ and $K$ as before in the proof of Lemma \ref{ConditionB}. The proof is similar to \cite{boyongChen-20151}. Set 
$$
\lambda_\epsilon = \chi (\log (-\psi) + \log \epsilon),  \ \ \  \epsilon \ll 1.
$$
Let $w \in B_R: = \{ |z| < R \}$. Applying Lemma \ref{Boberndtsson2001} with $\varphi$ and $\psi$ replaced by $\Phi_j$ and $\Psi_j$ respectively, we get a solution $u_{j,\epsilon}$ of

$$
\overline{\partial} u = K_\varphi(\cdot, w) \overline{\partial} \lambda_\epsilon
$$
such that 
$$
\int_{\mathbb{C}} |u_{j,\epsilon}|^2 e^{-\varphi'_j }d\lambda  \leq  \frac{C}{\alpha_j}\epsilon^2  \int_{\frac{1}{2\epsilon} \leq   -  \psi \leq \frac{1}{\epsilon}}  |K_\varphi(\cdot, w)|^2 e^{-\varphi_j} d \lambda \leq  \frac{C'}{\alpha_j}\epsilon^2 K_\varphi(w, w).
$$

%\begin{eqnarray*}
%& &  \int_{\mathbb{C}} |u_{j,\epsilon}|^2 e^{-\varphi'_j }d\lambda 
%=  \int_{\mathbb{C}} |u_{j,\epsilon}|^2 e^{\Psi_j -\Phi_j }d\lambda  \nonumber \\
%&\leq & \frac{6}{(1-\frac{\alpha_j}{8})^2}  \int_{\mathbb{C}} \left |K_\varphi(\cdot, w) \overline{\partial}\chi (\cdot)\right|_{\partial \overline{\partial}(\Phi_j+ \Psi_j ) } ^2 e^{ \Psi_j -\Phi_j }d\lambda \nonumber \\
%& \le &96 \int_{\frac{1}{2\epsilon} \leq -  \psi \leq \frac{1}{\epsilon}} |K_\varphi(\cdot, w)|^2 \frac{1}{\psi ^2} \left| \overline{\partial} \psi \right|_{\frac{\alpha_j}{2} i \partial \psi \wedge  \overline{\partial} \psi }^2 e^{-\varphi'_j}d\lambda\nonumber \\
 %\le  \frac{C}{\alpha_j}\epsilon^2 \int_{\frac{1}{2\epsilon} \leq  -  \psi \leq \frac{1}{\epsilon}}  |K_\varphi(\cdot, w)|^2 e^{-\varphi'_j}d\lambda \nonumber \\
%& \le & \frac{C}{\alpha_j}\epsilon^2  \int_{\frac{1}{2\epsilon} \leq   -  \psi \leq \frac{1}{\epsilon}}  |K_\varphi(\cdot, w)|^2 e^{-\varphi_j} d \lambda \nonumber \\
%& \le & \frac{C}{\alpha_j}\epsilon^2 .\nonumber \\\end{eqnarray*}
The last inequality holds because of the following argument. 

Since $ - \varphi_j$ monotonically decreases to $ - \varphi$, we can find appropriate control function of $ |K_\varphi(\cdot, w)|^2 e^{-\varphi_j}$ on $K$. So we have for all large enough $j$,
%Put $K: = \{z: z\in \mathbb{C}, \ \ -\psi\le  \frac{1}{ \epsilon} \} $. 
%Let $q\in K$ and let $\Delta(q)$ be a disc centered at $q.$ According to Corollary \ref{piece estimate} there exist a $j_q\ge 1$ and a number $0<r_q<1$  for each $w$  with $|w| < R$  so that $\int_{\Delta_{r_q}(q)}|\textcolor{blue}{K_\varphi(\cdot, w)}|^2 e^{-\varphi_{j_q}}d\lambda<\infty.$ 

%By compactness there are finitely many $q_i\in K, j_i\in \mathbb N, 1\le i \le m$ so that
%$K\subset \cup_{i=1}^m \Delta_{r_i}(q_i)$ and $\int_{\Delta_{r_i}(q_i)} |\textcolor{blue}{K_\varphi(\cdot, w)}|^2 e^{-\varphi_{j_i}}d\lambda<\infty.$ Let $j_0=\max \{j_i\}.$ Then when $j \geq j_0 $ and $|w|<R$ we have $\int_K |\textcolor{blue}{K_\varphi(\cdot, w)}|^2 e^{-\varphi_{j}}d\lambda<\infty.$

%The last inequality holds because of the strong openness theorem as in the proof of Lemma \ref{ConditionB} that 
%$$
%\int_{ -  \psi \leq \frac{1}{\epsilon}}  |K(\cdot, w)|^2 e^{-\varphi'_j} d\lambda \leq \int_{ -  \psi \leq \frac{1}{\epsilon}}  |K(\cdot, w)|^2 e^{-\varphi_j} d\lambda  <  +\infty,  \ \ \ \    j \rightarrow  +\infty.
%$$

\begin{eqnarray*}
\int_K |K_\varphi(\cdot, w)|^2 e^{-\varphi_j} d\lambda \longrightarrow \int_K  |K_\varphi(\cdot, w)|^2 e^{-\varphi} d\lambda 
\end{eqnarray*}
in view of the Lebesgue dominated convergence theorem. 

If $\epsilon \ll 1,$ then $B_{R+1} \subset \{  -\psi \leq \frac{1}{2\epsilon}\}$. Since $u_{j,\epsilon}$ is holomorphic on $\{  -\psi \leq  \frac{ 1}{2\epsilon} \}$, the mean value inequality yields 
\begin{eqnarray*}
 |u_{j,\epsilon}(w)|^2 & \leq &  C_n \int_{B_{R+1}} |u_{j,\epsilon}|^2 d\lambda  \nonumber \\
&\leq & C_{n, R} \int_{B_{R+1}}  |u_{j,\epsilon}|^2e^{ -\varphi_j}d\lambda \nonumber \\
&\leq & C'_{n, R} \int_{B_{R+1}}  |u_{j,\epsilon}|^2e^{ -\varphi'_j}d\lambda \nonumber \\
& \le & \frac{C''_{n, R}}{ \alpha_j}\epsilon^2 K_\varphi(w, w).\nonumber \\
\end{eqnarray*}
It follows that 
$$
f_{j,\epsilon} := \lambda_\epsilon K_\varphi(\cdot, w) - u_{j,\epsilon}
$$
is an entire function satisfying 
$$
|f_{j,\epsilon} (w)| \geq K_\varphi(w, w)  -\frac{C_{n,R}}{\sqrt{\alpha_j}} \epsilon
$$
and

\begin{eqnarray*}
 \|f_{j,\epsilon}\|_{H(\varphi_j)} &\leq & \|K_\varphi(\cdot, w)\|_{L^2(K, \varphi_j)}  +C  \|u_{j,\epsilon}\|_{L^2 (\mathbb{C}, \varphi_j)}  \nonumber \\
%\left\| K_\varphi(\cdot, w) \cdot\chi\left( \log (-\psi)+ \log \epsilon   \right)\right\|_{H(\varphi_j)} + \|u_{j,\epsilon}\|_{H(\varphi_j)}\nonumber \\
%&\leq & \left(\int _{-\psi \leq\frac{1}{\epsilon} } |K_\varphi(\cdot, w)|^2 e^{-\varphi_j}d\lambda\right)^{\frac{1}{2}} +C \left( \int_{\mathbb{C}} |u_{j,\epsilon}|^2 e^{-\varphi'_j}d\lambda\right)^{\frac{1}{2}}\nonumber \\
%& \leq & (1+ \frac{C}{\sqrt{\alpha_j}} \epsilon ) \left(\int _{-\psi \leq\frac{1}{\epsilon} } |K_\varphi(\cdot, w)|^2 e^{-\varphi}d\lambda\right)^{\frac{1}{2}} \nonumber \\ 
& \leq&  (1+ \frac{C}{\sqrt{\alpha_j}} \epsilon ) \|K_\varphi(\cdot, w)\|_{L^2(K, \varphi_j)}\nonumber \\ 
%\left(\int _{-\psi \leq\frac{1}{\epsilon} } |K_\varphi(\cdot, w)|^2 e^{-\varphi}d\lambda\right)^{\frac{1}{2}} \nonumber \\ 
& \leq&  (1+ \frac{C}{\sqrt{\alpha_j}} \epsilon ) \sqrt{ K_\varphi(w, w)}. \nonumber \\ 
\end{eqnarray*}
Thus we have 
$$
\frac{ |f_{j,\epsilon}(w) |}{  \| f_{j,\epsilon}\|_{H(\varphi_j)}} \geq \frac{K_\varphi(w, w) - C_{n,R, \alpha_j }\epsilon}{(1+ C_{\alpha_j} \epsilon ) \sqrt{K_\varphi(w, w)} },
$$
that is
$$
\liminf\limits_{j\rightarrow +\infty} K_{\varphi_j}(w, w) \geq K_\varphi(w, w).
$$
Since $\varphi_j \leq \varphi$ we know that $K_{\varphi_j}(w, w) \leq K_\varphi(w, w)$  for each $j \geq1 $, thus we obtain 
$$
\lim\limits_{j\rightarrow +\infty} K_{\varphi_j}(w, w) = K_\varphi(w, w),    \ \ \ \   \forall w \in \mathbb{C}.
$$
This completes the proof.
\end{proof}

%Before we prove Theorem \ref{Hartogsconvergence}, we need the following lemma.

%\begin{lemma} \label{le:comparizon}
%Let $K(z, w)$ be the weighted (with weight $\varphi$) Bergman kernel on the domain $\Omega \subset \mathbb{C}^n$.
%Then we have
%$$
%| K(z, w) |^2 \leq K(z, z) \cdot K(w, w), \ \ \   \forall z, w \in \mathbb{C}^n.
%$$
%\end{lemma}
%\begin{proof}
%Since 
%\begin{eqnarray*}
%|K(z, w)|^2 & = & \left | \int_{\Omega} K(y, w) K(z, y)  e^ {-\varphi (y)} d\lambda_y \right |^2 \\
%& \leq  &  \int_{\Omega} |K(y, w) |^2  e^ {-\varphi (y)}d \lambda_y \cdot \int_{\Omega}| K(z, y) |^2   e^ {-\varphi (y)} d\lambda_y \\
%&=& K(w, w) \cdot K(z, z),  \ \ \forall z, w \in \mathbb{C}^n.
%\end{eqnarray*}
%This completes the proof.
%\end{proof} 

\begin{proof} [Proof of Theorem \ref{Hartogsconvergence}]
 For each compact $F\subset \subset \mathbb{C}$, each fixed $w \in  F $ and $z \in F$, according to the mean value inequality we know that 
\begin{eqnarray}  \label{ineq:converge}
 &&|K_{\varphi_j} (z,w) -K_\varphi (z,w)| ^2 \nonumber \\
& \leq &C'  \| K_{\varphi_j} (\cdot, w) -K_\varphi(\cdot, w)\|^2 _{H(U,0)} \nonumber \\
&\leq & C  \| K_{\varphi_j} (\cdot, w) -K_\varphi(\cdot, w)\| ^2_{H(U, \varphi)} \nonumber \\
&\leq & C  \| K_{\varphi_j} (\cdot, w) -K_\varphi(\cdot, w)\| ^2_{H(\mathbb{C}, \varphi)} \nonumber \\
& = &  C \left( \int _{\mathbb{C}} |K_{\varphi_j}(\cdot, w)|^2 e^{-\varphi} d\lambda +  \int _{\mathbb{C}} |K_\varphi(\cdot, w)|^2 e^{-\varphi} d\lambda -2K_{\varphi_j}(w, w)\right) \nonumber \\
& \leq & C\left ( \int _{\mathbb{C}} |K_{\varphi_j}(\cdot, w)|^2 e^{-\varphi_j} d\lambda + K_\varphi(w, w) - 2K_{\varphi_j}(w, w) \right)\nonumber \\
&=& C (K_\varphi(w, w) - K_{\varphi_j}(w, w))
\end{eqnarray}
where $U$ is some neighborhood of the compact set $F$. By using the result of Theorem \ref{th:bergmanconver} we have $K_{\varphi_j}(z, w)$ pointwise converges to  $K_{\varphi}(z, w)$ in 
$\mathbb{C} \times \mathbb{C}$. Similarly we have 
\begin{eqnarray} \label{Bergmanconvergence}
 \lim \limits _{j \rightarrow \infty} K_{2(k+1)\varphi_j}(z,w) =  K_{2(k+1)\varphi} (z,w)   \ \ \  \forall z ,  w \in \mathbb{C}.
\end{eqnarray}
On the other hand, from Ligocka's formula
$$
K_{\Omega_j}[(z,t), (w,s)] = \sum\limits_{k=0}^{\infty} 2(k+1) K_{2(k+1)\varphi_j}(z,w) \langle t, s\rangle^k, \ \  z ,t , w, s \in \mathbb{C},  
$$
we can easily obtain that 
$$
2(k+1) K_{2(k+1)\varphi_j} (z,w) = \frac{\partial ^{2k}}{ \partial t^k \partial \overline{s}^k} K_{\Omega_j}[(z,t), (w,s)]  \rvert _{t=s=0}.
$$
For each $(z_0, t_0) \in \Omega \subset \mathbb{C}^2$, there exist  $r_1, r_2>0$  so that 
$$
(z_0, t_0 ) \in P: = \Delta (z_0, r_1) \times \Delta(0,r_2) \subset \subset \Omega \subset \Omega_j, \ \  j\geq1. 
$$
%According to Lemma \ref {le:comparizon} and the formula (\ref{comparizon})
Since $\varphi_j$ is increasing to $\varphi$ we know that 
%\begin{eqnarray} \label{comparizon}
%K_{\Omega_j} [(z, t), (z, t)] \leq K_{\Omega} [(z, t), (z, t)], \ \ \  \forall  (z, t) \in \Omega \subset \mathbb{C}^2
%\end{eqnarray}
for each $j\geq 1$
\begin{eqnarray} \label{Normal family}
|K_{\Omega_j} [(z,t), (w,s)] | & \leq & K_{\Omega_j} [(z,t), (z,t)]^{\frac12} K_{\Omega_j} [(w,s), (w,s)]^{\frac12}\\ \nonumber
& \leq & K_{\Omega} [(z,t), (z,t)]^{\frac12} K_{\Omega} [(w,s), (w,s)]^{\frac12}.
\end{eqnarray}
Put $M: = \sup\limits_j \sup\limits_{\overline{P} \times \overline{P} } |K_{\Omega_j} [(z,t), (w,s)]|, $ we have $M < +\infty.$
According to the Cauchy integral formula we obtain 
$$
\left \lvert  \frac{\partial ^{2k}}{ \partial t^k \partial \overline{s}^k} K_{\Omega_j}[(z,t), (w,s)] \right  \rvert
_{t=s=0}  \leq C_k \frac{M}{ r_2^{2k}}.
$$
Let $0 < r_1'< r_1$, $0< r_2' < r_2$, then for each $\epsilon >0,$ there exists $k_\epsilon \gg1$, so that 
$$
\sum \limits_{k \geq  k_\epsilon} 2(k+1) \left  \lvert  K_{2(k+1)\varphi_j} (z,w)   (t\overline{s})^k \right \rvert < \epsilon ,  \  \forall z,w \in  \Delta (z_0,r'_1),  \ \  \forall t,s \in  \Delta (0,r'_2), 
$$
and 
$$
\sum \limits_{k \geq  k_\epsilon} 2(k+1)   \lvert  K_{2(k+1)\varphi} (z,w)  (t\overline{s})^k  \rvert < \epsilon, \ \  \forall z,w \in  \Delta (z_0,r'_1),  \  \forall t,s \in  \Delta (0,r'_2). 
$$
Thus we get 
$$
\sum \limits_{k =0} ^{k_\epsilon} 2(k+1) K_{2(k+1)\varphi_j} (z,w) ( t\overline{s}) ^k  \longrightarrow \sum \limits_{k =0} ^{k_\epsilon} 2(k+1)   K_{2(k+1)\varphi } (z,w)  (t\overline{s})^k  
$$
by (\ref{Bergmanconvergence}) we have $K_{\Omega_j}[(z,t), (w,s)] $ pointwise converges to $ K_{\Omega}[(z,t), (w,s)] $ in $\Omega \times \Omega$. Because of (\ref{Normal family}) we know that the functions $K_{\Omega_j}[(z,t), (w,s) ]$ form a normal family in $\Omega \times \Omega$. From the normality and the pointwise convergence just proved, it is immediate that the convergence is uniform on compact subsets of $\Omega \times \Omega$. This  completes the proof.
\end{proof}

\textbf{Acknowledgements}  The  authors were supported in part by the Norwegian Research Council grant number 240569, the first author was also supported by NSFC grant 11601120. The first author also thanks Professor Bo-Yong Chen for his valuable comments.

\end{document}